\newtheorem{theorem}{Theorem}[section]
\newtheorem{lemma}[theorem]{Lemma}
\newtheorem{corollary}[theorem]{Corollary}
\newtheorem{proposition}[theorem]{Proposition}
\theoremstyle{definition}
\newtheorem{example}[theorem]{Example}
\theoremstyle{remark}
\newtheorem{remark}[theorem]{Remark}
\numberwithin{equation}{section}
\newcommand{\bbF}{\mathbb{F}}
\newcommand{\bbC}{\mathbb{C}}
\newcommand{\bbP}{\mathbb{P}}
\newcommand{\bbQ}{\mathbb{Q}}
\newcommand{\bbZ}{\mathbb{Z}}
\def\det{{\text{det}}}
\def\NS{{\rm{NS}}}
\def\spmapright#1{\smash{%
   \mathop{\hbox to 1.3cm{\rightarrowfill}}
       \limits^{#1}}}
\def\sbmapright#1{\smash{%
   \mathop{\hbox to 1.3cm{\rightarrowfill}}
       \limits_{#1}}}
\newcommand{\mapright}[1]{%
\smash{\mathop{%
   \hbox to 1cm{\rightarrowfill}}\limits^{#1}}}
\newcommand{\mapleft}[1]{%
\smash{\mathop{%
   \hbox to 1cm{\leftarrowfill}}\limits^{#1}}}
\begin{document}
\title[K3 surfaces with 9 cusps]{K3 surfaces with 9 cusps in characteristic $p$}
\author{Toshiyuki Katsura}
\thanks{Partially supported by JSPS Grant-in-Aid 
for Scientific Research (B) No. 15H03614} 
\address{Faculty of Science and Engineering, Hosei University,
Koganei-shi, Tokyo 184-8584, Japan}
\email{toshiyuki.katsura.tk@hosei.ac.jp}

\author{Matthias Sch\"utt}
\address{Institut f\"ur Algebraische Geometrie, 
Leibniz Universit\"at  Hannover, Welfengarten 1, 30167 Hannover, Germany, and
\newline\indent
Riemann Center for Geometry and Physics, 
  Leibniz Universit\"at Hannover, 
  Appelstrasse 2, 30167 Hannover, Germany}
\email{schuett@math.uni-hannover.de}

\date{February 1, 2019}

\begin{abstract}
We study K3 surfaces with 9 cusps, i.e.\ 9 disjoint $A_2$ configurations
of smooth rational curves, over algebraically closed fields of characteristic $p\neq 3$.
Much like in the complex situation studied by Barth,
we prove that each such surface admits a triple covering by an abelian surface.
Conversely, we determine which  abelian surfaces with order three automorphisms
give rise to K3 surfaces.
We also investigate how K3 surfaces with 9 cusps hit the supersingular locus.
\end{abstract}

\maketitle

\section{Introduction}

In two papers from the 1990's \cite{Barth1}, \cite{Barth2},
Barth studied complex K3 surfaces with 9 cusps,
i.e.\ with 9 disjoint $A_2$ configurations of smooth rational curves.
Barth's arguments were of topological nature,
using a triple cover by some suitable abelian surface.
In this paper, we follow a more algebraic approach
which lends itself to investigate the same problem
over any algebraically closed field $k$ of characteristic $p\neq 3$
(which we fix throughout this paper).
This enables us to detect several interesting phenomena;
in particular, we also include the Zariski K3 surfaces in characteristics $p\equiv -1$ mod $3$
from \cite{KS}.
Combined with explicit calculations for abelian surfaces (in positive characteristic)
and the characteristic-free divisibility results for certain divisor classes from \cite{S-nodal},
we prove the following results:
 
\begin{theorem}
\label{thm1}
If $X$ is a K3 surface with 9 cusps, then $X$ admits a triple covering by an abelian surface
with an automorphism of order $3$.
\end{theorem}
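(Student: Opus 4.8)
The plan is to realise the abelian surface as a cyclic triple cover of the surface obtained by contracting the nine cusps. First I would contract the nine $A_2$ configurations by a birational morphism $\beta\colon X\to\bar X$; since each $A_2$ resolution is crepant, $\bar X$ is a normal surface with nine rational double points of type $A_2$ and with $\omega_{\bar X}\cong\mathcal{O}_{\bar X}$. The local input is that an $A_2$-singularity has local class group $\bbZ/3$, and that the associated degree-$3$ cover (the canonical cover with respect to a generator) is finite, unramified in codimension one, and has \emph{smooth} total space, locally modelled on $\mathbb{A}^2\to\mathbb{A}^2/\mu_3$; here the hypothesis $p\neq 3$ enters, keeping all triple covers tame. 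Consequently it is enough to exhibit a reflexive sheaf $\mathcal{L}$ on $\bar X$ with $\mathcal{L}^{[3]}\cong\mathcal{O}_{\bar X}$ whose class generates the local class group at each singular point $p_i$: then the cyclic triple cover $\pi\colon A\to\bar X$ defined by $\mathcal{L}$ (the relative spectrum over $\bar X$ of $\mathcal{O}_{\bar X}\oplus\mathcal{L}^{[-1]}\oplus\mathcal{L}^{[-2]}$ with its natural algebra structure) is a smooth surface, finite of degree $3$, carrying a faithful $\mu_3$-action, with $\omega_A\cong\pi^{*}\omega_{\bar X}\cong\mathcal{O}_A$ because $\pi$ is unramified in codimension one.

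Producing $\mathcal{L}$ is a statement about $\NS(X)$. Writing $\theta_i,\theta_i'$ for the two curves of the $i$-th configuration and $\Lambda=\langle\theta_i,\theta_i'\rangle_{1\le i\le 9}\cong A_2^{\oplus 9}\subseteq\NS(X)$, such an $\mathcal{L}$ corresponds to a class $\ell\in\NS(X)\cap\Lambda^{\vee}$ whose image in the glue group $\Lambda^{\vee}/\Lambda\cong(\bbZ/3)^{9}$ has all nine coordinates non-zero; concretely, after relabelling the two curves in each configuration appropriately, this image is $\tfrac13\sum_i(\theta_i-\theta_i')\bmod\Lambda$. That the glue code $C:=(\NS(X)\cap\Lambda^{\vee})/\Lambda\subseteq(\bbZ/3)^9$ is large is forced by a discriminant-group count ($\dim C\ge 3$, since the discriminant form of $\Lambda$ has $\bbF_3$-rank $9$ while the rank-$4$ orthogonal complement in $H^2$ of the saturation of $\Lambda$ permits $3$-rank at most $4$, nothing being lost to $\NS(X)$ as $p\neq 3$). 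The crucial strengthening, which I would draw from the characteristic-free divisibility results of \cite{S-nodal}, is that $C$ in fact meets all nine coordinates — that none of the nine cusps is decoupled from $\NS(X)$. Granting this, the lattice step finishes itself: the glue code of an even overlattice is isotropic, so every $c\in C$ has $\mathrm{wt}(c)$ divisible by $3$, and the identity $\sum_{c\in C}\mathrm{wt}(c)=18\cdot 3^{\dim C-1}$ then pushes the average non-zero weight above $6$, forcing some $c\in C$ to have weight $9$ — the class $\ell$ we want.

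Granting $\mathcal{L}$, it remains to identify $A$. A short count of topological Euler characteristics gives $e(\bar X)=e(X)-9\cdot 2=24-18=6$, and since $\pi$ is étale of degree $3$ over $\bar X$ away from the nine points $p_i$, each of which has a single smooth preimage, $e(A)=3(6-9)+9=0$. Together with $\omega_A\cong\mathcal{O}_A$, the Bombieri–Mumford classification of surfaces with numerically trivial canonical class in characteristic $p$ forces $A$ to be an abelian surface: a K3 surface is excluded by $e=24\neq 0$, while bielliptic, (quasi-)hyperelliptic and Enriques surfaces are excluded because $\omega_A$ is genuinely trivial, not merely torsion (and $A$ is minimal, carrying no $(-1)$-curve since $\omega_A\cong\mathcal{O}_A$). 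The generator of the deck group $\mu_3$ then furnishes an automorphism of $A$ of order $3$ — after a translation, one fixing the origin — whose fixed locus is exactly $\pi^{-1}(\{p_1,\dots,p_9\})$, nine points of type $\tfrac13(1,2)$. Finally, combining $\pi\colon A\to\bar X$ with the resolution $\beta\colon X\to\bar X$ and normalising the fibre product produces the asserted degree-$3$ cover of $X$ by the abelian surface $A$.

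The main obstacle is the middle step: one needs the required torsion class to exist on \emph{every} K3 surface with nine cusps — equivalently, that no cusp is decoupled from $\NS(X)$, i.e.\ that the glue code involves all nine coordinates — rather than only on special ones. This is precisely the content I would import from \cite{S-nodal}. Everything else (the local geometry of the $A_2$-cover, the Euler-number bookkeeping, and the surface classification) is routine once that divisibility is in hand.
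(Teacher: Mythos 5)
Your proposal is correct and follows essentially the same route as the paper: the same lattice-theoretic input (the divisibility result of \cite{S-nodal}, which rules out extra $(-2)$-classes and hence weight-$3$ glue vectors, combined with isotropy and the rank-$4$ bound on the orthogonal complement) yields the same weight-$9$ order-$3$ divisor class, the cyclic triple cover it defines is the paper's construction carried out on the contracted model $\bar X$ instead of on the blow-up $\tilde X$, and the identification of $A$ via $\omega_A\cong\mathcal{O}_A$, $e(A)=0$ and the Bombieri--Mumford classification is the paper's step (4). The only adjustment worth making is an attribution in the middle step: what \cite{S-nodal} actually supplies is the exclusion of weight-$3$ codewords (Lemma \ref{lem:69}), and the facts that the glue code meets all nine coordinates and contains a weight-$9$ word are then consequences of your weight-sum count (or of Barth's classification of the code, as in Remark \ref{rem:H}) rather than an independent input.
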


\begin{theorem}
\label{thm2}
If $X$ is a supersingular K3 surface with 9 cusps, then 
\begin{itemize}
\item
either $X$ is the supersingular K3 surface of Artin invariant $\sigma=1$,
\item
or $X$ has Artin invariant $\sigma=2$ and $p\equiv -1\;  {\rm mod } \; 3$.
\end{itemize}
\end{theorem}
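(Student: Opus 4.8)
The plan is to run the triple covering from Theorem~\ref{thm1} and to bound the Artin invariant of $X$ against an invariant of the covering abelian surface.

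Let $X$ be a supersingular K3 surface with $9$ cusps. By Theorem~\ref{thm1} there are an abelian surface $A$ and an automorphism $\varphi$ of $A$ of order $3$ with $X$ the minimal resolution of $\bar Y:=A/\langle\varphi\rangle$, the nine $A_2$-singularities of $\bar Y$ being the images of the nine fixed points of $\varphi$; at each of these $d\varphi$ has eigenvalues $\zeta_3,\zeta_3^{-1}$, which is precisely what forces the quotient singularity to be of type $A_2$. Translating $A$ so that one fixed point becomes the origin, we may assume $\varphi\in\Aut(A,0)$, and because the fixed points are isolated $\varphi$ has no eigenvalue $1$ on $H^1(A)$; hence $\varphi^2+\varphi+1=0$ in $\mathrm{End}(A)$ and $\mathbb{Z}[\zeta_3]\hookrightarrow\mathrm{End}(A)$. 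Since $A$ dominates the supersingular surface $X$ with finite fibres, a cohomology comparison shows that $A$ is itself a supersingular abelian surface --- a non-supersingular $A$ carrying such a $\varphi$ would yield a K3 surface with nonzero transcendental cohomology. Consequently $A$ is isogenous to $E\times E$ for a supersingular elliptic curve $E$, has $\NS(A)$ of rank $6$, and its Artin invariant $\sigma_0$ lies in $\{1,2\}$ with $\mathrm{disc}\,\NS(A)=-p^{2\sigma_0}$, equality $\sigma_0=1$ holding exactly when $A$ is superspecial.

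Next comes a lattice comparison. The eighteen exceptional curves over the $A_2$-points span $R\cong 9A_2\subset\NS(X)$ with $\mathrm{disc}\,R=3^{9}$, its saturation $R^{\ast\ast}$ in $\NS(X)$ being controlled by the divisibility results of \cite{S-nodal}; in particular $[R^{\ast\ast}:R]$ is a power of $3$. The orthogonal complement $R^{\perp}=(R^{\ast\ast})^{\perp}\subset\NS(X)$ has rank $4$ and is identified with $\NS(\bar Y)$. Pulling back along the quotient map $A\to\bar Y$, which is étale in codimension one, multiplies the intersection form by $3$ and carries $\NS(\bar Y)$ into the $\varphi$-invariant lattice $\NS(A)^{\varphi}$ with cokernel killed by $3$; likewise $[\NS(A):\NS(A)^{\varphi}\oplus\NS(A)_{\varphi}]$ is a power of $3$. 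Tracking discriminants through this chain --- all intervening indices, and $\mathrm{disc}\,R$, being powers of $3$ --- one finds that the $p$-primary part of $\mathrm{disc}\,\NS(X)$ divides the $p$-primary part of $\mathrm{disc}\,\NS(A)=-p^{2\sigma_0}$. As $X$ is supersingular, $\mathrm{disc}\,\NS(X)=-p^{2\sigma}$, so $p^{2\sigma}\mid p^{2\sigma_0}$ and hence $\sigma\le\sigma_0\le 2$; in particular $\sigma\ge 3$ cannot occur.

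It remains to exclude $\sigma=2$ --- equivalently $\sigma_0=2$, i.e.\ $A$ not superspecial --- when $p\equiv 1\bmod 3$. Averaging a polarization of $A$ over $\langle\varphi\rangle$ gives a $\varphi$-invariant polarization, whose Rosati involution sends $\zeta_3\in\mathbb{Z}[\zeta_3]\subset\mathrm{End}(A)$ to $\zeta_3^{-1}$. When $p\equiv 1\bmod 3$ the prime $p=\pi\bar\pi$ splits in $\mathbb{Z}[\zeta_3]$, and the idempotents of $\mathbb{Z}_p[\zeta_3]\cong\mathbb{Z}_p\times\mathbb{Z}_p$ split the $p$-divisible group (equivalently the Dieudonné module) $A[p^{\infty}]$ into two $p$-divisible subgroups that are interchanged by the duality attached to the polarization, since the Rosati involution swaps $\pi$ and $\bar\pi$. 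These two subgroups therefore have equal height $2$, and, being connected of slope $1/2$, each is the unique such group $G_{1,1}$; thus $A[p^{\infty}]\cong G_{1,1}^{2}$, so $a(A)=2$, $A$ is superspecial, and $\sigma=\sigma_0=1$. Hence $\sigma=2$ forces $p\equiv -1\bmod 3$, which is the assertion. I expect the main obstacle to be the lattice bookkeeping of the second paragraph: keeping precise track of how $R$, its saturation in $\NS(X)$, and the pulled-back lattice $\NS(A)^{\varphi}$ glue together, so that only powers of $3$ are lost and one gets $\sigma\le\sigma_0$ (indeed $\sigma=\sigma_0$); by contrast the $p$-divisible group computation, although it is the conceptual heart of the congruence, is short once the $\varphi$-invariant polarization is in hand.
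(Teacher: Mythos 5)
Your argument reaches the correct conclusion, but by a genuinely different route from the paper's, and with one step that needs repair. The paper proves Theorem~\ref{thm2} entirely on the K3 side, in Section~\ref{s:pf2}, before the triple cover is even constructed: for $\sigma>2$ it observes that $A_{\NS(X)}\cong\bbF_p^{\,2\sigma}$ must be supported on the discriminant group of $L^\perp$, whose length is bounded by $\rank L^\perp=4$; for $\sigma=2$ it identifies $N=L^\perp(1/p)\cong U(3)+A_2$ and notes that the gluing condition $q_L=-q_{L^\perp}|_H=-p\cdot q_N$, combined with $q_L=q_N$, would force $q_N=-q_N$ when $p\equiv 1\bmod 3$, which is absurd. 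You instead invoke Theorem~\ref{thm1} (legitimate, since its proof in Section~\ref{s:pf1} does not use Theorem~\ref{thm2}), transfer the Artin invariant to the covering abelian surface, and explain the congruence through the splitting of $p$ in $\bbZ[\zeta_3]$: the two idempotents of $\bbZ_p[\zeta_3]$ split $A[p^\infty]$ into Serre-dual factors of height $2$, forcing $A[p^\infty]\cong G_{1,1}^2$ and hence $A$ superspecial. This is arguably more conceptual --- it exhibits the arithmetic reason why $p\equiv 1\bmod 3$ forces $\sigma=1$ --- but it costs you the lattice bookkeeping of your second paragraph (which does work: every intervening index is a power of $3$, so the $p$-valuations of the discriminants propagate as you claim) plus appeals to Oort's characterization of superspecial abelian surfaces and to the classification of $\NS$ of supersingular abelian surfaces.

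The step that does not stand as written is the assertion that $X$ supersingular forces $A$ supersingular because a non-supersingular $A$ ``would yield a K3 surface with nonzero transcendental cohomology.'' Counting $\ell$-adic dimensions is not enough: an ordinary abelian surface isogenous to the square of an ordinary CM elliptic curve has $\rho(A)=4=\dim {\rm H}^2(A,\bf{Q}_\ell)^{\varphi}$, so a priori the whole invariant part could be algebraic and $\rho(X)=22$. To close this you need crystalline slopes (equivalently, the formal Brauer group): since the quotient has $A_2$ singularities, $d\varphi$ has eigenvalues $\omega,\omega^{-1}$ at each fixed point, so $\varphi$ acts trivially on ${\rm H}^2(A,\calO_A)$ and hence on the slope-$0$ line of ${\rm H}^2_{\rm crys}(A)$; if $A$ were ordinary this line would survive into ${\rm H}^2_{\rm crys}(X)$, giving $X$ finite height, a contradiction. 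The $p$-rank $1$ case is excluded by the corollary to Proposition~\ref{non-isogenous}, so $A$ is indeed supersingular. With that repair, and the $3$-power index claims verified, your proof goes through.
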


Both theorems are supported by ample examples,
starting from suitable abelian surfaces with an automorphism of order $3$.
In fact, for an abelian surface to admit a triple K3 quotient is quite restrictive,
both in the simple and non-simple case:

\begin{proposition}
\label{non-isogenous}
Let $A$ be an abelian surface such that A is isogenous to $E_{1}\times E_{2}$
with elliptic curves $E_{i}$ ($i = 1, 2$). Assume $E_{1}$ is not isogenous to $E_{2}$.
Let $\sigma$ be an automorphism of $A$ of order 3. Then, $A/\langle \sigma \rangle$
is not birationally equivalent to a K3 surface.
\end{proposition}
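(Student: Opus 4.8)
The plan is to rule out a K3 quotient by analysing the linear action of $\sigma$ near a fixed point and then playing it off against the hypothesis that $E_1$ is not isogenous to $E_2$. Fix a primitive cube root of unity $\zeta\in k$ (available since $p\neq 3$), and suppose for contradiction that $A/\la\sigma\ra$ is birationally equivalent to a K3 surface. First I would dispose of the fixed-point-free case: if $\sigma$ acts on $A$ without fixed points, then $A\to A/\la\sigma\ra$ is an étale triple cover, so $A/\la\sigma\ra$ is a smooth projective surface with $\chi(\mathcal O)=\tfrac13\chi(\mathcal O_A)=0$, which (being its own minimal model, as $K$ is torsion) cannot be birational to a K3 surface because any K3 surface has $\chi(\mathcal O)=2$. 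Hence $\sigma$ has a fixed point, and after translating $A$ I may assume $\sigma\in\Aut(A,0)$ is a group automorphism of order $3$. I would also record that $\sigma$ acts faithfully on the tangent space $T_0A$: using $p\neq 3$, an order-$3$ element of $\Aut(A,0)$ acting trivially on $T_0A$ is readily seen to be trivial.

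Next I would invoke the standard constraint on the induced linear action. For an order-$3$ automorphism $\sigma\in\Aut(A,0)$ of an abelian surface, $A/\la\sigma\ra$ can be birational to a K3 surface only if $\sigma$ acts on $T_0A$ with eigenvalues $\{\zeta,\zeta^{2}\}$: if $1$ occurs as an eigenvalue, then $A/\la\sigma\ra$ is smooth and carries a nonzero global $1$-form pulled back from $A$, so it is not birational to a K3; and if the eigenvalues are $\{\zeta,\zeta\}$ or $\{\zeta^{2},\zeta^{2}\}$, then the nowhere-vanishing $2$-form of $A$ does not descend, the fixed points of $\sigma$ become cyclic quotient singularities of type $\tfrac{1}{3}(1,1)$, which are not Du Val, and hence the minimal resolution has $p_g=0$ — again excluding a K3. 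So $\sigma$ acts on $T_0A$ with eigenvalues $\zeta$ and $\zeta^{2}$.

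Finally I would bring in the hypothesis that $E_1$ is not isogenous to $E_2$. Because the two elliptic factors lie in distinct isogeny classes, $A$ has a unique abelian subvariety $A_1$ isogenous to $E_1$ and a unique one $A_2$ isogenous to $E_2$ (the isotypic components); they intersect in a finite group and $A_1+A_2=A$. Being a group automorphism, $\sigma$ carries each $A_i$ onto an abelian subvariety isogenous to $E_i$, hence $\sigma(A_i)=A_i$, so $T_0A=T_0A_1\oplus T_0A_2$ is a direct sum of $\sigma$-stable lines, and the eigenvalues of $\sigma$ on $T_0A$ are exactly the two scalars by which $\sigma$ acts on $T_0A_1$ and on $T_0A_2$. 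By the previous paragraph neither of these scalars is $1$, so $\sigma|_{A_i}$ is an automorphism of the elliptic curve $A_i$ of order exactly $3$. Over an algebraically closed field of characteristic $\neq 3$, an elliptic curve admitting an automorphism of order $3$ has $j$-invariant $0$, and there is a unique such curve up to isomorphism; hence $A_1\cong A_2$, so $E_1$ is isogenous to $A_1\cong A_2$, which is isogenous to $E_2$ — contradicting the hypothesis, and completing the proof.

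I expect the only delicate point to be the constraint on the action on $T_0A$ in the second step: one must check, in arbitrary characteristic $p\neq 3$, that a triple quotient singularity other than the Du Val point of type $A_2$ (equivalently, one with $\det\neq 1$) cannot appear on a surface birational to a K3. This rests on tame local linearizability of the $\la\sigma\ra$-action at its fixed points (valid since $p\nmid 3$), the rationality of the resulting cyclic quotient singularities, the computation of their discrepancies, and the birational invariance of $p_g$ (and of $h^0(\Omega^1)$) among smooth projective surfaces. Everything else — the reduction to a group automorphism, the faithfulness of $\sigma$ on $T_0A$, the uniqueness and $\sigma$-invariance of the isotypic elliptic subcurves $A_i$, and the final appeal to the classification of elliptic curves carrying an order-$3$ automorphism — is routine.
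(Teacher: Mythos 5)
Your argument is sound in outline and ends up resting on the same two pillars as the paper's proof: the uniqueness of the elliptic curve with an order-$3$ automorphism ($j=0$), and the descent of invariant $1$-forms when $\sigma$ acts trivially in one direction. The organization differs: the paper fixes one elliptic subcurve $E$, uses the fibration $A\to A/E$ and the linear system $|3F|$ to make $\sigma$ act on base and fibre, and then splits into the cases ``both actions nontrivial'' (forcing $j=0$ on both, hence an isogeny) and ``one action trivial'' (Albanese, resp.\ invariant $1$-forms). You instead linearize at a fixed point and first prove the eigenvalues on $T_0A$ must be $\{\zeta,\zeta^2\}$; this costs you the extra $\det\neq 1$ case (eigenvalues $\{\zeta,\zeta\}$), which you dispatch via $p_g=0$ of the resolution, whereas in the paper's arrangement that case is absorbed into ``both actions nontrivial'' and never needs the discrepancy/$p_g$ discussion. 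Both routes are legitimate; yours is slightly longer but makes the local structure more transparent.

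There is one step that is actually false as stated in characteristic $p$: the claim that $T_0A=T_0A_1\oplus T_0A_2$. The addition map $A_1\times A_2\to A$ is an isogeny with kernel isomorphic to $A_1\cap A_2$, and this group scheme need not be reduced; e.g.\ for two ordinary, non-isogenous curves one can take $A=(E_1\times E_2)/\Delta(\mu_p)$ with $\mu_p$ embedded diagonally via the canonical $\mu_p\subset E_i[p]$, and then $A_1\cap A_2\cong\mu_p$, so the two tangent lines $T_0A_1$ and $T_0A_2$ \emph{coincide} inside $T_0A$ and there is no direct sum. Fortunately your proof does not need the direct sum: each $T_0A_i$ is a $\sigma$-stable line, and once you know the eigenvalues of $\sigma$ on $T_0A$ are the two distinct primitive cube roots of unity, every $\sigma$-stable line is an eigenline with eigenvalue $\zeta$ or $\zeta^2$, hence $\neq 1$; so $\sigma|_{A_i}$ is still a nontrivial automorphism of order $3$ and the $j=0$ contradiction goes through. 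Please replace the direct-sum sentence by this eigenline argument. The remaining delicate points you flagged (tame linearization at fixed points, birational invariance of $h^0(\Omega^1)$ and $p_g$ in characteristic $p$, vanishing of $h^0(\Omega^1)$ for K3 surfaces in all characteristics) are all true and standard, so no further repair is needed.
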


In comparison, simple abelian surfaces are quite delicate to treat 
as we shall explore in Sections \ref{s:endo}, \ref{s:simple}.
In the context of this paper, it turns out that ordinarity enters as an essential ingredient:

\begin{theorem}
\label{thm:simple}
Let $A$ be a simple ordinary abelian surface with an automorphism
$\sigma$ of order 3. Assume 
that $\sigma$ is not a translation.
Then, the quotient surface $A/ \langle \sigma \rangle$ is birationally equivalent 
to a K3 surface.
\end{theorem}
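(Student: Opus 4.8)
The plan is to analyze the quotient $Y = A/\langle\sigma\rangle$ by resolving its singularities and computing the numerical invariants of a smooth model $\widetilde Y$, then showing they match those of a K3 surface. The first step is to locate the fixed locus of $\sigma$. Since $A$ is simple, $\sigma$ cannot fix a curve (the fixed locus of a nontrivial automorphism of order $3$ on an abelian surface, after translating so that $\sigma$ becomes a group automorphism, is a subgroup scheme, and a positive-dimensional piece would be an elliptic curve contradicting simplicity — here the ordinarity hypothesis is also relevant since it guarantees the group scheme of fixed points is reduced, ruling out the wild behaviour that could occur when $p = 2$ or $p = 3$, though $p = 3$ is excluded throughout). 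So $\sigma$ has only isolated fixed points. After translating, assume $\sigma \in \Aut(A,0)$; then the fixed points form the subgroup $\Ker(1-\sigma) = A[1-\sigma]$, and since $(1-\sigma)(1+\sigma+\sigma^2)=1-\sigma^3=0$ but $1+\sigma+\sigma^2$ has degree... one computes that $\deg(1-\sigma) = 9$ over an ordinary (hence $p\neq 3$) abelian surface, so there are exactly $9$ fixed points, each of type $\frac{1}{3}(1,2)$ (a cyclic quotient singularity, i.e.\ an $A_2$ point) because $\sigma$ acts on the tangent space with eigenvalues $\zeta_3,\zeta_3^2$ — the eigenvalues cannot both equal $\zeta_3$, else $\sigma$ would act as a scalar and the differences would not generate enough of $A$.

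The second step is the resolution. Each of the $9$ singular points is resolved by a chain of two $(-2)$-curves (the $A_2$ resolution), contributing nothing to the canonical class beyond what is forced, and the minimal resolution $\widetilde Y \to Y$ has $K_{\widetilde Y}$ numerically trivial: indeed $K_A = 0$ pulls back under the quotient map away from the branch locus, and since the singularities are rational double points (canonical, Gorenstein), $K_Y$ is trivial and the resolution is crepant, so $K_{\widetilde Y} = 0$. The third step is to compute $b_2$ or equivalently $\chi(\mathcal O_{\widetilde Y})$. For the structure sheaf, $\chi(\mathcal O_Y) = \chi(\mathcal O_A)^{\langle\sigma\rangle}$-type count: $\chi(\mathcal O_A) = 0$, and taking $\langle\sigma\rangle$-invariants of $H^i(A,\mathcal O_A)$ (here $H^0 = k$ is invariant, $H^2 = k$ is invariant since $\sigma$ acts trivially on $H^0(A,\Omega^2_A) = H^0(A,\mathcal O_A(K_A))$ as it fixes a nonzero global $2$-form — this is where $\sigma\in\Aut(A,0)$ matters — and $H^1(A,\mathcal O_A)$ contributes $0$ to the invariants because $\sigma$ acts on the $2$-dimensional space $H^1$ with eigenvalues $\zeta_3,\zeta_3^2$, no invariants) gives $\chi(\mathcal O_Y) = 1 - 0 + 1 = 2$, and since rational double points do not change $\chi(\mathcal O)$ under resolution, $\chi(\mathcal O_{\widetilde Y}) = 2$. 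Together with $K_{\widetilde Y} \equiv 0$ this forces $\widetilde Y$ to be either a K3 surface or an abelian surface; the latter is excluded because $\widetilde Y$ contains the $18$ exceptional $(-2)$-curves (an abelian surface has no rational curves), so $\widetilde Y$ is a K3 surface, and by construction it carries $9$ disjoint $A_2$-configurations, i.e.\ $9$ cusps.

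I expect the main obstacle to be the fixed-point count and the local structure of the fixed points in positive characteristic: one must rule out non-reduced fixed schemes and verify that $\deg(1-\sigma) = 9$ with all fixed points contributing $A_2$-singularities, and this is precisely where the \emph{ordinary} and \emph{non-translation} hypotheses do the work — ordinarity ensures $A[1-\sigma]$ is étale of the expected rank (no supersingular or wild contributions), while "not a translation" guarantees $\sigma$ acts nontrivially on the tangent space at a fixed point so that the quotient singularity is genuinely $A_2$ rather than smooth. A secondary subtlety is confirming that $\sigma$ acts trivially on $H^2(A,\mathcal O_A)$; this follows because a group automorphism of an abelian surface acts on $H^0(\Omega^2_A)$ through $\det$ of its action on $H^0(\Omega^1_A)$, and since the eigenvalues there are $\zeta_3$ and $\zeta_3^2$, the determinant is $\zeta_3^3 = 1$. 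Once these local and cohomological points are settled, the global classification of surfaces with trivial canonical class and $\chi(\mathcal O) = 2$ finishes the argument cleanly.
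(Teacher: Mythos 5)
Your overall strategy (resolve, show the resolution is crepant with $\chi(\mathcal O)=2$, then invoke the classification) is legitimate and genuinely different from the paper's route, which never computes the singularity type directly: the paper instead controls the eigenvalues of $\sigma$ on $H^1_{et}(A,\mathbf Q_\ell)$, runs through the Enriques--Kodaira classification of the resolution $Y$, and kills the rational/Enriques possibilities by noting that $H^2(A,\mathbf Q_\ell)^{\langle\sigma\rangle}$ is $4$-dimensional and would have to be algebraic, contradicting the bound $\rho(A)\le 3$ for simple abelian surfaces extracted from Mumford's classification of endomorphism algebras. However, your argument has a genuine gap exactly at its crux: you need the eigenvalues of $\sigma$ on the tangent space at a fixed point (equivalently, on $H^0(A,\Omega^1_A)$) to be $\{\zeta_3,\zeta_3^2\}$ rather than $\{\zeta_3,\zeta_3\}$, and both justifications you offer are circular --- you rule out the scalar case ``else the differences would not generate enough of $A$'' (not an argument), and you deduce triviality on $H^0(\Omega^2_A)$ from the very eigenvalue pattern you are trying to establish. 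In the scalar case everything downstream fails: the nine singularities are of type $\frac13(1,1)$, which is rational but not Gorenstein, the resolution is not crepant, $H^2(A,\mathcal O_A)^{\langle\sigma\rangle}=0$ so your Euler characteristic computation gives $\chi(\mathcal O_Y)=1$, and the quotient is in fact rational. This is not a phantom worry: for $E\times E$ with the diagonal order-$3$ action (with $E$ ordinary of $j$-invariant $0$, $p\equiv 1 \bmod 3$) the tangent eigenvalues really are $\{\zeta_3,\zeta_3\}$ and the quotient really is rational, so simplicity must intervene at precisely this point --- yet your proposal never uses simplicity there. Note also that the general constraints you might hope to lean on do not suffice: the characteristic polynomial of $\sigma$ on $H^1_{dR}$ is $(x^2+x+1)^2$ reduced mod $p$, but the Hodge subspace $H^0(\Omega^1_A)$ may a priori pick out any $2$-element sub-multiset of $\{\zeta_3,\zeta_3,\zeta_3^2,\zeta_3^2\}$. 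To repair the proof you would need to import something equivalent to the paper's Corollary \ref{Picard} and Lemma \ref{eigenvalue} (or the Albanese argument excluding eigenvalue $1$ together with a separate argument pinning down the Hodge decomposition of the $\sigma$-action for a \emph{simple} ordinary surface); the secondary points (the fixed scheme is \'etale of degree $9$ since $\det(1-\sigma)=9$ is prime to $p$, invariants commute with cohomology since $3\nmid p$) are fine.
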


Simple abelian surfaces in positive characteristic
turn out to be quite hard to exhibit explicitly, especially in characteristic $2$.
Therefore we conclude the paper  with an explicit one-dimensional family of K3 surfaces
with 9 cusps valid in any characteristic $\neq 3, 5$
such that the generic covering abelian surface is simple
(and we also provide an alternative family covering characteristic $5$).

\begin{remark}
Many of our arguments also work in characteristic zero,
but to ease the presentation we decided to restrict to the positive characteristic case.
\end{remark}

\section{Lattice theory for K3 surfaces with 9 cusps}

Let $X$ be an algebraic K3 surface over an algebraically closed field $k$ of characteristic $p\neq 3$.
Assume that $X$ contains 9 disjoint $A_2$ configurations of smooth rational curves. Then
we have to determine the primitive closure of the resulting sublattice in $\NS(X)$:
\[
L := (A_2^9)' \subset\NS(X).
\]
From general lattice theory (see e.g.\ \cite{Nikulin}),
we know that $L$ is determined by some isotropic subgroup $H$
of the discriminant group $G=(A_2^\vee)^9/A_2^9$.
Here the latter space is identified with the vector space $\bbF_3^9$,
so the given problem can be analysed using coding theory.
In \cite{Barth2}, Barth achieves this by showing (topologically over $\bbC$)
the following lemma:

\begin{lemma}
\label{lem:69}
Any non-zero  vector in $H$ has length 6 or 9.
\end{lemma}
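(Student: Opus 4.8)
The plan is to reformulate the problem entirely in the language of coding theory over $\bbF_3$ and then use a combination of the K3 constraints (rank and discriminant bounds for $\NS(X)$) together with the structure of the quadratic form on the discriminant group to rule out vectors of length $3$. Concretely, $G = (A_2^\vee/A_2)^9 \cong \bbF_3^9$ carries the quadratic form $q$ which on each $A_2$-summand takes the value $2/3 \bmod 2\bbZ$ (equivalently $-2/3$), so that on a vector $v \in \bbF_3^9$ of weight (=number of nonzero coordinates) $w$ one has $q(v) \equiv \tfrac{2w}{3} \bmod 2\bbZ$. For $H$ to define a lattice $L \subset \NS(X)$ one needs $H$ isotropic, i.e.\ $q|_H = 0$, which forces every $v \in H$ to have weight $w \equiv 0 \bmod 3$; thus a priori $w \in \{3,6,9\}$. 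The content of the lemma is that $w = 3$ cannot occur.

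First I would record the lattice-theoretic input: $L = (A_2^9)'$ sits primitively in $\NS(X)$, whose rank is at most $22$ (at most $20$ in characteristic $0$, but we only need $\le 22$), so $\rank L = 18$ forces $\rank \NS(X) \in \{18,19,20,21,22\}$; more importantly, $|\disc L| = 3^{9}/|H|^2$ and this must divide (up to squares) the discriminant of $\NS(X)$, and the signature of $\NS(X)$ is $(1,\rho-1)$ while that of $L$ is $(0,18)$, so the orthogonal complement has signature $(1, \rho - 19)$ and discriminant group controlled by $H$ as well. The key point is that $\dim_{\bbF_3} H = h$ gives $|\disc L| = 3^{9-2h}$, and for this to be an integer we need $h \le 4$; if $h = 4$ then $L$ is unimodular away from nothing (discriminant $3$), which is extremely restrictive. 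So the real combinatorial problem is to classify isotropic codes $H \subset \bbF_3^9$ with respect to $q$ and show none contains a weight-$3$ word — or rather, that the presence of a weight-$3$ word is incompatible with the self-orthogonality and dimension constraints that a genuine $\NS(X)$ imposes.

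The main step, and the main obstacle, is the coding-theoretic heart: suppose $v \in H$ has weight $3$, supported (after permuting the nine $A_2$'s) on coordinates $\{1,2,3\}$. Since $H$ is totally isotropic for $q$, it is in particular totally isotropic for the associated bilinear form $b(x,y) = q(x+y)-q(x)-q(y)$, which on $\bbF_3^9$ is (a scaling of) the standard dot product; so $v \perp H$, meaning every word of $H$ has coordinate-sum zero on the support $\{1,2,3\}$, i.e.\ restricts to a word in the length-$3$ repetition/parity code. One then shows that $H$ must be "supported off" these three coordinates in a strong sense — more precisely, that $H$ decomposes (or embeds) so that the projection to the remaining six coordinates is again isotropic of weight divisible by $3$, which by induction on the number of $A_2$-blocks is impossible unless that projection is $0$; but then $H$ would be spanned by weight-$3$ words on disjoint triples, and one checks directly that two disjoint weight-$3$ isotropic words are mutually orthogonal but their sum has weight $6$ and the resulting code has discriminant $3^{9-2h}$ with $h \le 3$, leaving $L$ with discriminant group of order $\ge 3^3$ — and here the contradiction comes from matching against $\NS(X)$: a weight-$3$ word means one of the three $A_2$-configurations gets "absorbed" into a single $(-2)$-class, forcing a configuration of three disjoint rational curves whose classes are dependent in $\NS(X)$ modulo a single class, which by the divisibility results of \cite{S-nodal} (the characteristic-free input advertised in the introduction) cannot happen for nine genuinely disjoint cusps. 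I expect the delicate part is precisely to leverage \cite{S-nodal} (or the analogue of Barth's topological argument) to exclude the weight-$3$ case cleanly rather than by brute-force enumeration of all self-orthogonal $\bbF_3$-codes of length $9$; an honest fallback is the finite computation: enumerate maximal isotropic $H$ (dimension $\le 4$) and verify the weight distribution directly, which is feasible but unenlightening.

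Finally I would assemble the pieces: given that no weight-$3$ word occurs, every nonzero $v \in H$ has weight in $\{6,9\}$, which is the assertion. I would also remark that this already pins down $H$ severely — a code in $\bbF_3^9$ with all nonzero weights in $\{6,9\}$ and self-orthogonal is, up to coordinate permutation and sign changes, very limited (one recognizes here the ternary structure behind the extremal examples), and this is exactly the input needed for the subsequent determination of $L = (A_2^9)'$ and hence of $\NS(X)$.
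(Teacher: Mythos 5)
The correct argument appears only as an aside at the very end of your third paragraph, and the route you actually develop in front of it cannot work. The paper's proof is a one-liner: a weight-$3$ isotropic word, supported say on the first three $A_2$-blocks, lifts to the integral class $v=\frac13\sum_{i=1}^3(C_i+2C_i')$ in the overlattice $L$ (after possibly swapping $C_i$ and $C_i'$); each summand $\frac13(C_i+2C_i')$ has square $-2/3$ and the three summands are mutually orthogonal, so $v^2=-2$. Thus $v$ is a vector of square $-2$ in $L\setminus A_2^9$, and this is exactly what the characteristic-free divisibility theorem of \cite{S-nodal} forbids. That is the entire proof. You do gesture at this (``a weight-$3$ word means one of the three $A_2$-configurations gets absorbed into a single $(-2)$-class \dots which by the divisibility results of \cite{S-nodal} cannot happen''), but you never carry out the square computation that turns this into a proof, and your description of the geometric consequence (``three disjoint rational curves whose classes are dependent modulo a single class'') is not the correct statement.

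More seriously, the machinery you place before this remark --- the inductive coding-theoretic argument and the proposed ``honest fallback'' of enumerating all self-orthogonal codes in $\bbF_3^9$ --- is a dead end. Isotropy alone does not exclude weight-$3$ words: the line spanned by $(1,1,1,0,\dots,0)$ is totally isotropic for $q$ (both nonzero elements of $A_2^\vee/A_2$ have $q$-value $-2/3$, so the word has $q$-value $-2\equiv 0 \bmod 2\bbZ$), hence isotropic codes containing weight-$3$ words certainly exist, and no enumeration of abstract codes can prove the lemma. The exclusion of weight $3$ is irreducibly geometric: it uses that $L$ lies inside $\NS(X)$ of a K3 surface containing the $18$ given rational curves. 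Your inductive step (``the projection to the remaining six coordinates is again isotropic \dots which by induction is impossible unless that projection is $0$'') also has no base case and proves nothing, since weight-$6$ and weight-$9$ words are allowed and do occur. Strip all of that out and keep only the $(-2)$-vector computation together with the appeal to \cite{S-nodal}.
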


\begin{proof}
The same holds true in arbitrary characteristic  since
isotropic vectors of length $3$ would yield a  vector in $L\setminus A_2^9$ of square $-2$,
a contradiction to \cite{S-nodal} (which is valid in any characteristic).
\end{proof}

In order to determine $L$, it will be instrumental to work out a suitable reference lattice $\Lambda$
into which $L$ embeds primitively.
If $X$ has finite height, then it is known by work of Deligne 
that $X$ lifts to characteristic zero with a full set of generators of $\NS(X)$.
Hence we can take $\Lambda$ to be the standard  even unimodular lattice of rank $22$ and signature $(3,19)$,
\[
\Lambda = \Lambda_\text{K3} = U^3 + E_8^2
\]
(to which $H^2(Y,\bbZ)$ of any complex K3 surface $Y$ is isomorphic).
On the other hand, if $X$ is supersingular,
say of Artin invariant $\sigma$,
then we may just take
\[
\Lambda= \Lambda_{p,\sigma} = \NS(X)
\]
the unique even hyperbolic lattice of rank $22$ and discriminant group 
$$A_\Lambda = \bbF_p^{\,2\sigma}.
$$
What unifies both variants is that they have the same rank while being prime to $3$ in the sense that, by assumption,
the discriminant is not divisible by $3$.
In comparison, $L$ is also prime to $p$ since it has discriminant $-3^r$, where $r=9-2\cdot |H|$.
More precisely, $L$ has discriminant group 
\begin{eqnarray}
\label{eq:r}
A_L\cong \bbF_3^{\,r}.
\end{eqnarray}

By construction, $L$ embeds primitively into $\Lambda$.
Since $L$ and $\Lambda$ are relatively prime in the above terminology,
the orthogonal complement $L^\perp$ admits a subgroup $H\subseteq A_{L^\perp}$ such that 
not only the discriminant groups are isomorphic,
\begin{eqnarray}
\label{eq:H}
H \cong A_L,
\end{eqnarray}
but also the discriminant forms agree up to sign:
\[
q_L = - q_{L^\perp}|_H.
\]
In particular, $H$ and $A_L$ share the same length (i.e.\ minimum number of generators).
Presently this is $r$ by \eqref{eq:r}, and on the other hand,
the length is a priori bounded by the rank of $L^\perp$, i.e.  $r\leq 4$.

\begin{lemma}
\label{lem:L}
$L$ is an overlattice of $A_2^9$ of index $27$,
determined uniquely up to isometries by its discriminant form
\[
q_L = - q_M
\]
for $M=U(3)+A_2(-1)$.
\end{lemma}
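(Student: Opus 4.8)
The plan is to recast the problem in terms of ternary codes and then feed in Lemma~\ref{lem:69}. Overlattices of $A_2^9$ correspond bijectively to isotropic subgroups $H$ of the discriminant group $G=(A_2^\vee)^9/A_2^9\cong\bbF_3^9$, and $L$ is the one attached to the particular $H$ realized inside $\NS(X)$; here ``isotropic'' is meant with respect to the discriminant quadratic form $q_{A_2^9}$. Since $q_{A_2}$ takes the value $-2/3$ on both non-zero classes of $A_2^\vee/A_2$, the form $q_{A_2^9}$ vanishes on $c=(c_1,\dots,c_9)$ exactly when $3\mid\mathrm{wt}(c)$; thus $H$ is a ternary code all of whose codewords have weight divisible by $3$, and by Lemma~\ref{lem:69} every non-zero codeword then has weight precisely $6$ or $9$. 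Combined with the bound $r=9-2\dim_{\bbF_3}H\in[0,4]$ recorded before the statement and the parity of $r$, this already pins $\dim_{\bbF_3}H$ down to $\{3,4\}$.

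First I would eliminate $\dim_{\bbF_3}H=4$: in that case $H$ would be a $[9,4,6]_3$ code, which is forbidden by the Griesmer bound $6+\lceil6/3\rceil+\lceil6/9\rceil+\lceil6/27\rceil=10>9$ (equivalently, the weight enumerator forced by the first power moment, $A_6=78,A_9=2$, violates the second). Hence $\dim_{\bbF_3}H=3$, so $[L:A_2^9]=3^3=27$ and $A_L\cong\bbF_3^3$. Uniqueness of $L$ is then the heart of the matter. Since by \cite{S-nodal} every $(-2)$-vector of $L$ already lies in $A_2^9$, any isometry between two such overlattices restricts to an isometry of $A_2^9$, and $\Aut(A_2^9)$ acts on $G$ as the full group of signed coordinate permutations; so it suffices to show that the code $H$ is unique up to monomial equivalence. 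Representing $H$ by a $3\times9$ generator matrix and regarding its columns as a multiset $S$ of nine points of $\bbP^2(\bbF_3)$, the codeword indexed by $v\in\bbF_3^3$ has weight $9-\#(S\cap v^\perp)$, so the weight condition says exactly: every line of $\bbP^2(\bbF_3)$ meets $S$ in $0$ or $3$ points. A short incidence count (each of the $13$ points lies on $4$ of the $13$ lines) shows first that $S$ has no repeated point, and then that its four-point complement must be a line; hence $S$ is an affine plane $\bbP^2(\bbF_3)\setminus\ell$, unique up to $\PGL_3(\bbF_3)$, i.e.\ $H$ is the first-order ternary Reed--Muller code $\mathrm{RM}_3(1,2)$. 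This classification is where the real work sits.

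Finally I would compute the discriminant form. In the Reed--Muller model one has $A_L=H^\perp/H$, where $H^\perp$ is the code of functions $\bbF_3^2\to\bbF_3$ of degree $\le2$, and $q_L$ is the restriction of $q_{A_2^9}$; evaluating on the classes of $x^2,y^2,xy$ yields $q_L\cong q_{U(3)}\oplus\langle-2/3\rangle$, which is precisely $-q_M$ for $M=U(3)+A_2(-1)$ (using $\langle-2/3\rangle=q_{A_2}=-q_{A_2(-1)}$ and $q_{U(3)}\cong-q_{U(3)}$). Alternatively, in the finite-height case the orthogonal complement $L^\perp\subset\Lambda_\text{K3}$ has rank $4$ and is indefinite, hence is determined by $q_{L^\perp}=-q_L$ via Eichler's theorem, and one checks it is isometric to $M$; but the code-theoretic computation has the advantage of being manifestly independent of $p$, so the lemma holds verbatim in every characteristic $\neq3$. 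I expect the classification of the code in the previous paragraph to be the only genuinely non-formal step; the remainder is a dimension count together with a finite calculation of a discriminant form.
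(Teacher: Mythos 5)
Your proof is correct, and in substance it reconstructs the argument that the paper simply outsources to \cite{Barth2}: the paper's proof consists of citing Barth's code-theoretic classification (fed by Lemma \ref{lem:69} and the length bound $r\le 4$ coming from the rank-$4$ complement) together with Barth's identification of $L^\perp\subset\Lambda_{\text{K3}}$ with $M$. You have filled in all the steps explicitly --- the translation of isotropy into ``all weights divisible by $3$'', the elimination of $\dim_{\bbF_3}H=4$ via the Griesmer bound (a $[9,4,6]_3$ code would need length $\ge 10$), and the incidence-geometric identification of the code with the affine plane $\bbP^2(\bbF_3)\setminus\ell$, i.e.\ the ternary first-order Reed--Muller code --- and each of these checks out (the incidence count indeed forces exactly one empty line and multiplicity one at every point). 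The one place where you genuinely diverge from the paper is the discriminant form: the paper reads $q_L=-q_M$ off from Barth's computation of $L^\perp\cong M$ inside $\Lambda_{\text{K3}}$ and then remarks that ``the shape of $L$ does not depend on the characteristic'', whereas you evaluate $q_{A_2^9}$ directly on $H^\perp/H=\langle x^2,y^2,xy\rangle$ in the Reed--Muller model. Your computation is right ($q(x^2)=q(y^2)=0$, $b(x^2,y^2)=1/3$, $q(xy)=-2/3$, the cross terms with $xy$ vanishing, giving $q_{U(3)}\oplus q_{A_2}\cong -q_M$), and this route has the advantage of being intrinsic to $L$ and manifestly characteristic-free, so no lifting or reference lattice is needed for this part; the paper's route is shorter only because the work is already done in the reference. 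Your observation that isometries of $L$ must preserve $A_2^9$ (since, by \cite{S-nodal}, $L$ has no $(-2)$-vectors outside $A_2^9$) correctly justifies why monomial equivalence of codes is the right notion for uniqueness.
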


\begin{proof}
The construction of $L$ follows exactly the lines of \cite{Barth2},
just using the existence of some isomorphism \eqref{eq:H} and Lemma \ref{lem:69}.
In particular, this shows that $L$ is unique up to isometries.
In loc. cit.  it was also proved that $L^\perp$ inside $\Lambda_\text{K3}$ 
is isometric to $M=U(3)+A_2(-1)$.
Since the shape of $L$ does not depend on the characteristic,
the statement on the discriminant forms is always valid.
\end{proof}

\begin{remark}
\label{rem:H}
The above argument also shows as in \cite{Barth2}
that the subgroup $H$ of $G$ contains a vector of length $9$.
This will be quite useful in the proof of Theorem \ref{thm1}.
\end{remark}

\section{Proof of Theorem \ref{thm2}}
\label{s:pf2}

We are now ready to prove Theorem \ref{thm2}.
To support it, we recall the following two well-known constructions
of K3 surfaces with 9 cusps (cf. Katsura \cite{K}, for instance).

\begin{example}
Let $E$ be an elliptic curve defined by
$$
y^{2} +y = x^{3},
$$
and let $\sigma$ be an automorphism of $E$ given by
$$
   x \mapsto \omega x,~ \;\; y \mapsto y
$$
with $\omega$ a primitive cube root of unity. Then, $\sigma \times \sigma^{2}$ is 
an automorphism
of the abelian surface $A = E \times E$ of order 3 and the quotient surface 
$A/\langle \sigma \times \sigma^{2} \rangle$ is birationally equivalent to a K3 surface
with 9 cusps.
Note that in case $p \equiv 1 ~({\rm mod} ~3)$, $A$ is ordinary, and 
in case $p \equiv -1 ~({\rm mod} ~3)$, $A$ is supersingular
(since the same holds for the elliptic curve $E$).
\end{example}

\begin{example}
Let $E$ be an elliptic curve, and we set $A = E \times E$. Let $\sigma$ be
the automorphism of $A$ defined by
$$
\left(
\begin{array}{cc}
   0 & \iota \\
   id & \iota
   \end{array}
   \right)
$$
where $\iota$ is the inversion of $E$.
Then $\sigma$  has order 3 and 
the quotient surface 
$A/\langle \sigma \rangle$ is birationally equivalent to a K3 surface with 9 cusps.
Note that in case $E$ is ordinary, $A$ is also ordinary, and 
in case $E$ is supersingular, $A$ is also supersingular.
\end{example}

%

Accidentally, we treated the case of supersingular K3 surfaces of Artin invariant $\sigma=2$
in characteristic $p\equiv -1$ mod $ 3$ in \cite{KS}.
Namely we proved that all these K3 surfaces are Zariski
(i.e.\ unirational, admitting an inseparable covering by $\bbP^2$ of degree $p$)
by exploiting exactly the structures imposed by a configuration of 9 disjoint $A_2$'s.

\medskip

In order to prove Theorem \ref{thm2},
it remains
to treat 
the cases of Artin invariants $\sigma>2$ as well as $\sigma=2$ in characteristic $p\equiv 1$ mod $3$.

\subsection{Artin invariant $\sigma>2$}

In the previous section, we bounded the length of $A_L$ by considering the orthogonal complement $L^\perp$
inside the reference lattice $\Lambda$ (which was coprime to $L$).
Here we can argue along similar lines for $\Lambda$ itself
(cf.\ \cite[Thm.\ 6.1]{KS}).
Namely, for the same reason as above,
the discriminant group $A_\Lambda\cong \bbF_p^{\, 2\sigma}$ has to be supported on $A_{L^\perp}$.
But again, $L^\perp$ has rank $4$, so $\sigma\leq 2$ as claimed.

\subsection{Artin invariant $\sigma=2$ in characteristic $p\equiv 1$ mod $3$}

For reasons to become clear in a moment,
we omit the restriction on the characteristic for the time being.
That is, we just assume that $\sigma=2$, and for simplicity that $p>3$
(because for computations with even lattices it is often easier to exclude $p=2$).
Suppose that $L$ admits a primitive embedding 
\[
L\hookrightarrow \Lambda = \Lambda_{p,2}
\]
and let $L^\perp$ denote the orthogonal complement as before.
We have seen above that $A_\Lambda$ is supported on $A_{L^\perp}$.
Presently, this means that $A_{L^\perp}$ has $p$-length $4$,
i.e.\  $L^\perp$ is $p$-divisible as an even lattice (since $p>2$).
We can thus scale $L^\perp$ by $1/p$ and obtain an even hyperbolic lattice
\begin{eqnarray}
\label{eq:N}
N = L^\perp\left(\frac 1p\right),
\end{eqnarray}
 of rank $4$ and discriminant $-27$ (the same as the discriminant of  $L$ up to sign).
We claim that
\[
N \cong U(3)+A_2.
\]
To see the claim, we impose a duality in the spirit of \cite{Kondo-Shimada}
to derive the even hyperbolic lattice $N^\vee(3)$ of same rank $4$, but discriminant $-3$.
The invariants are small enough to infer that $N^\vee(3)\cong U+A_2$.
Now the claim follows by applying the duality again (since $A_2^\vee(3)\cong A_2$).

To conclude, we return to the subgroup 
$$\bbF_3^{\,3}\cong H\subset A_{L^\perp}
\cong \bbF_p^{\,4}\times\bbF_3^{\,3}
$$
from \eqref{eq:H}.
The discriminant form on $H$ can be read off from \eqref{eq:N} as follows:
\begin{eqnarray}
\label{eq:q}
q_{L^\perp}|_H = p\cdot q_N = 
\begin{cases}
q_N & \text{ if $p\equiv 1$ mod } 3,\\
-q_N & \text{ if $p\equiv -1$ mod } 3,
\end{cases}
\end{eqnarray}
(the quadratic forms taking  values in  $\bbQ/2\bbZ$).
Recall that gluing $L$ to $L^\perp$ along $H$ requires exactly
that 
\[
q_L = - q_{L^\perp}|_H
\]
Independent of $p$, 
we already know that $q_L=q_N$ since $N=M(-1)$, see the proof of Lemma \ref{lem:L}.
Note that this asserts the case $p\equiv -1$ mod $3$ in \eqref{eq:q}.
Hence the other alternative, with $p\equiv 1$ mod $3$, can only persist
(for some $p$) if  $q_N=-q_N$.
But this is absurd -- for instance, it would imply that $N$ glues to itself to give an even unimodular lattice,
but this would have signature $(2,6)$, contradiction.
This completes the proof of Theorem \ref{thm2}.
\qed

\section{Proof of Theorem \ref{thm1}}
\label{s:pf1}

With these lattice theoretic preparations,
it is not hard to give a proof of Theorem \ref{thm1}.
Starting from a K3 surface $X$ containing 9 disjoint $A_2$ configurations
of smooth rational curves, we not only have the sublattice $L$ inside $\NS(X)$ from Lemma \ref{lem:L},
but we are also equipped with a vector $v$ of length $9$ inside the discriminant group $G$ of $A_2^9$
which in fact is integral, i.e.\ belongs to $L$ (see Remark \ref{rem:H}).
Explicitly, $v$ may be represented as
\[
v = \frac 13\sum_{i=1}^9 (C_i+2C_i')
\]
where the $C_i, C_i'$ are the smooth rational curves supporting the nine $A_2$ configurations
(up to exchanging the two curves).
Following classical theory (e.g.\ \cite{Miranda}), this divisor determines a triple covering of $X$
which we can use to our advantage.
Indeed, we can proceed exactly as in \cite[\S 5]{KS}, so we just give the rough outline of the construction:
\begin{enumerate} 
\item
blow up the intersection points $C_i\cap C_i'$ to get $\tilde X$;
\item
switch to the smooth triple covering $\tilde A$;
\item
minimalize to $A$ by first blowing down the strict transforms of the $C_i, C_i'$
and then those of the exceptional curves in $\tilde X$;
\item
check using the classification of algebraic surfaces that $A$ is an abelian surface.
\end{enumerate}
For each step, the arguments from \cite{KS} go through
-- regardless of the characteristic and of the question
whether $X$ is supersingular or not.
Automatically, the triple covering endows $A$ with an automorphism $\sigma$ of order $3$
such that $X$ can be recovered as minimal desingularization of the quotient $A/\langle\sigma\rangle$,
and this completes the proof.
(The following diagram of maps is only reproduced for the convenience of the reader.)
\qed

$$
\begin{array}{ccccc}
\tilde A & \to & \hat A &  \to & A\\
\downarrow &&&& \downarrow\\
\tilde X & \to & X & \to & A/\langle\sigma\rangle
\end{array}
$$

\section{Non-simple abelian surfaces with automorphism of order 3}

This section provides a proof of Proposition \ref{non-isogenous},
so we let $A$ be an abelian surface such that A is isogenous to $E_{1}\times E_{2}$
with elliptic curves $E_{i}$ ($i = 1, 2$). We assume that $E_{1}$ is not isogenous to $E_{2}$
and that $A$ admits an automorphism  $\sigma$  of order 3.

%

If $\sigma$ is fixed-point free, then it is clear that the quotient surface is
either abelian of hyperelliptic. 
It remains to consider the case where $\sigma$ has a fixed point on $A$, say $P$.
By our assumption, we have an exact sequence
$$
   0 \longrightarrow E \longrightarrow A \stackrel{f}{\longrightarrow} A/E \longrightarrow 0
$$
with an elliptic curve $E$. Let $F$ be a fiber of $f$ such that $F$ passes through 
the fixed point $P$. Then we have $\sigma(F) \cap F \ni P$. If $\sigma(F) \neq F$,
then $\sigma (F)$ would be a multi-section of $f$ and we would have an isogeny
from $\sigma (F)$ to $A/E$. Therefore, $A$ would be isogenous to $F \times A/E$
with $F$ isgenous to $A/E$, a contradiction to our assumption.
It follows that $F = \sigma (F)$. We put $f(F) = Q$. Then, we have $3F = f^{*}(3Q)$
and, by Riemann--Roch, $3Q$ is a very ample divisor on $A/E$. Therefore, the linear system $\mid 3F \mid$
gives the morphism $f$. Since $F$ is invariant under the action of $\sigma$, $\sigma$
acts on the vector space $L(3F)$. Therefore, $\sigma$ induces an action on $A/E$ fixing $Q$.

Suppose first that $\sigma$ does not act as the identity on $F$ nor on $A/E$.
Thus both elliptic curves admit an automorphism of order $3$,
but this curve is unique up to isomorphism (j-invariant zero),
so $F\cong A/E$, contradiction.


Suppose that $\sigma$ acts as identity on $A/E$.
Then we have a morphism
$A/\langle \sigma \rangle \longrightarrow A/E$. Therefore, we have the dimension 
$q(A/\langle \sigma \rangle) \geq 1$ of the Albanese variety of $A/\langle \sigma \rangle$. 
In particular, $A/\langle \sigma \rangle$ cannot be
birationally equivalent to a K3 surface. 

Suppose that $\sigma$ acts as identity on $F$. Since $F$ is non-singular, there exist
local coordinates $x$, $y$ such that $\sigma (x) = x$ and $\sigma (y) = \omega y$
with $\omega$, a cube root of unity (it may be 1). 
Applying the same argument to any fixed point of $\sigma$, 
we see that the quotient surface
$A/\langle \sigma \rangle$ is non-singular. Therefore, we have 
${\rm H}^{0}(A/\langle \sigma \rangle, \Omega_{A/\langle \sigma \rangle}^{1}) \cong
{\rm H}^{0}(A, \Omega_{A}^{1})^{\langle \sigma \rangle}$. 
Since we have a natural isomorphism
$m_{P}/m_{P}^{2} \cong {\rm H}^{0}(A, \Omega_{A}^{1})$ and 
$\dim (m_{P}/m_{P}^{2})^{\langle \sigma \rangle} \geq 1$, we obtain
$$\dim {\rm H}^{0}(A/\langle \sigma \rangle, \Omega_{A/\langle \sigma \rangle}^{1}) \geq 1,
$$
and again $A/\langle \sigma \rangle$ cannot be a K3 surface.
This concludes the proof of Proposition \ref{non-isogenous}.
\qed

\begin{corollary}
Let $A$ be an abelian surface with $p$-rank 1. Then, there exists no automorphism $\sigma$
of order 3 on $A$ such that $A/\langle \sigma \rangle$ is birationally equivalent to 
a K3 surface.
\end{corollary}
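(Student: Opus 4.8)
The plan is to reduce the hypothesis that $A/\langle\sigma\rangle$ be birationally a K3 surface to the existence of a $\bbZ[\omega]$-action on $A$, and then to play the $p$-divisible group $A[p^{\infty}]$ off against the $p$-rank. First I would write $\sigma$ as the composition of a group automorphism $\rho$ of $A$ with a translation and argue that $\rho$ has order $3$ and satisfies $\rho^{2}+\rho+1=0$ in $\mathrm{End}(A)$, equivalently that $\rho$ endows $A$ with an action of $\bbZ[\omega]$, $\omega$ acting as $\rho$. Indeed, a translation would give $q(A/\langle\sigma\rangle)=2$, so $\rho\neq\mathrm{id}$ and hence $\rho$ has order $3$; and, arguing as in the proof of Proposition~\ref{non-isogenous}, $\rho$ cannot act on $\mathrm{Lie}(A)$ with an eigenvalue $1$ (this would either make $\rho$ trivial, which it is not, or force $p_{g}(A/\langle\sigma\rangle)=0$), so $\rho$ acts on $\mathrm{Lie}(A)$ with eigenvalues $\{\omega,\omega^{2}\}$; using a $\rho$-invariant polarization together with the fact that the Hodge filtration on $H^{1}$ is isotropic, one checks that the characteristic polynomial of $\rho$ on $H^{1}$ is then $(T^{2}+T+1)^{2}$, so that $\rho^{2}+\rho+1$ annihilates a Tate module and therefore vanishes in $\mathrm{End}(A)$. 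In particular $p$ is unramified in $\bbZ[\omega]$ (recall $p\neq3$): it is inert if $p\equiv-1\bmod3$ and split if $p\equiv1\bmod3$, and I would treat the two cases separately.

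If $p\equiv-1\bmod3$ this is easy. Since $A$ has $p$-rank $1$, the maximal \'etale quotient of $A[p^{\infty}]$ is $\bbQ_{p}/\bbZ_{p}$, with Tate module $\bbZ_{p}$; the endomorphism $\omega$ acts on this functorially, hence as a unit $u\in\bbZ_{p}^{\times}$ with $u^{2}+u+1=0$. But $\bbZ_{p}$ contains no primitive cube root of unity unless $p\equiv1\bmod3$, a contradiction. (This argument uses only $\rho^{2}+\rho+1=0$, not the full K3 hypothesis, and it covers $p=2$.)

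If $p\equiv1\bmod3$, write $(p)=\mathfrak{p}\bar{\mathfrak{p}}$ in $\bbZ[\omega]$, so that $A[p^{\infty}]=G_{\mathfrak{p}}\oplus G_{\bar{\mathfrak{p}}}$ with $G_{\mathfrak{p}},G_{\bar{\mathfrak{p}}}$ of height $2$. Here I would first produce a polarization of $A$ compatible with the $\bbZ[\omega]$-action: from any polarization $\lambda_{0}$ the average $\lambda:=\lambda_{0}+\omega^{\vee}\lambda_{0}\omega+(\omega^{2})^{\vee}\lambda_{0}\omega^{2}$ is again a polarization (a sum of pullbacks of $\lambda_{0}$ along automorphisms of $A$), and a direct computation gives $\omega^{\dagger}=\omega^{-1}$ for its Rosati involution. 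This relation forces $\lambda$ to carry $G_{\mathfrak{p}}$ isogenously onto the Cartier dual of $G_{\bar{\mathfrak{p}}}$, so the Newton slopes of $G_{\mathfrak{p}}$ are those of $G_{\bar{\mathfrak{p}}}$ with $s$ replaced by $1-s$. Since a height-$2$ $p$-divisible group over $k$ has Newton slopes $\{0,0\}$, $\{1,1\}$, $\{0,1\}$ or $\{\tfrac{1}{2},\tfrac{1}{2}\}$, running through the four cases for $G_{\bar{\mathfrak{p}}}$ shows that $A[p^{\infty}]$ is in each case ordinary or supersingular, i.e.\ $A$ has $p$-rank $0$ or $2$, contradicting the hypothesis. (If one prefers, one can instead first dispose of the case where $A$ is non-simple, by observing that then $A$ is isogenous to a product of an ordinary and a supersingular elliptic curve, which are non-isogenous, so that Proposition~\ref{non-isogenous} applies; this reduces matters to simple $A$.)

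I expect the genuine difficulty to be concentrated in the split case $p\equiv1\bmod3$: arranging a polarization compatible with the $\bbZ[\omega]$-action, and then tracking carefully how Cartier duality exchanges the $\mathfrak{p}$- and $\bar{\mathfrak{p}}$-parts of $A[p^{\infty}]$. Everything else, namely the reduction to a $\bbZ[\omega]$-action and the inert case, is routine and largely parallels the arguments already used for Proposition~\ref{non-isogenous}.
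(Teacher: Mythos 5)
Your proof is correct, but it follows a genuinely different route from the paper's. The paper splits into two cases: for non-simple $A$ it invokes Proposition~\ref{non-isogenous} (since $p$-rank $1$ forces $A\sim E_1\times E_2$ with $E_1$ ordinary and $E_2$ supersingular, hence non-isogenous), and for simple $A$ it defers to Section~\ref{s:simple}, where the contradiction comes from Lemma~\ref{injective}: $\mathrm{End}(A)\otimes\bbZ_p$ injects into $\mathrm{End}_{\bbZ_p}(T_p(A))\cong\bbZ_p$, which is too small to contain $\bbQ(\sigma)\otimes\bbQ_p$ --- an argument whose injectivity input really uses simplicity. You instead work uniformly: you first extract the relation $\rho^2+\rho+1=0$, i.e.\ a $\bbZ[\omega]$-action, directly from the K3 hypothesis via the eigenvalues on $\mathrm{Lie}(A)$ and the Hodge filtration (the paper gets this for free only in the simple case, where $\rho-1$ is automatically an isogeny); your inert case then only needs the ring homomorphism $\mathrm{End}(A)\to\mathrm{End}(T_p(A))$ and the absence of primitive cube roots of unity in $\bbZ_p$, neatly sidestepping the injectivity lemma, while your split case $p\equiv 1\bmod 3$ is handled by a CM/Newton-polygon duality argument ($G_{\mathfrak p}\sim G_{\bar{\mathfrak p}}^{\vee}$ via an $\omega$-invariant polarization, and a height-$2$ $p$-divisible group cannot have slopes $\{0,\tfrac12\}$) that has no counterpart in the paper. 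What your approach buys is a self-contained, case-free treatment that does not rely on Mumford's classification of endomorphism algebras or on the simple/non-simple dichotomy; what it costs is that the reduction to $\rho^2+\rho+1=0$ is the most compressed step and deserves care: the claim that $\rho$ acting with eigenvalues $\{1,1\}$ on $\mathrm{Lie}(A)$ forces $\rho=\mathrm{id}$ needs the linearization of the action on $\widehat{\mathcal O}_{A,0}$ (valid since $3$ is prime to $p$), and the Lagrangian argument should be run on the isocrystal (or replaced by the observation that $\hat\rho\neq\mathrm{id}$ rules out eigenvalues $\{1,1\}$ on $H^1(\mathcal O_A)=\mathrm{Lie}(\hat A)$ as well) to avoid degeneracy issues when $p$ divides the degree of the polarization. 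These are fixable details, not gaps.
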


\begin{proof}
If $A$ is simple, we will show this corollary in the next section.
If $A$ is non-simple, $A$ is isogenous to a product $E_{1} \times E_{2}$
of two elliptic curves $E_{i}$ ($i = 1, 2$). Since the $p$-rank of $A$ is 1,
one of $E_{i}$'s is ordinary and the other is supersingular. Therefore,
$E_{1}$ is not isogenous to $E_{2}$. Hence, the result follows from 
Propositon \ref{non-isogenous}.
\end{proof}

\section{Endomorphism algebras of simple abelian surfaces}
\label{s:endo}

As before, we let $k$ be an algebraically closed field of characteristic $p > 0$.
We summerize here some results by Mumford on the endomorphism algebras
of simple abelian surfaces (\cite{M} Section 21, Theorem 2) over $k$.
Let $A$ be a simple abelian surface and let ${\rm End}(E)$ be the endomorphism ring of $A$.
We denote by $A_{n}$ the (reduced) $n$-torsion group of $A$.
We set $D = {\rm End}^{0}(E) = {\rm End}(E)\otimes_{{\bf Z}}{\bf Q}$. 
Then, $D$ is a central simple
division algebra. We denote by $K$ the center of $D$  and by $K_{0}$
the subfield of $K$ which is fixed by the Rosati involution.
We put $[D : K] = d^{2}$, $[K : {\bf Q}] = e$ and $[K_{0} : {\bf Q}] = e_{0}$.
We also put $S = \{ x \in D \mid x' = x\}$. It is known that
$\dim_{{\bf Q}} S$ is equal to the Picard number $\rho (A)$ of $A$.
We put $\eta = \frac{\dim_{{\bf Q}} S}{\dim_{{\bf Q}} D}$.
Then, Mumford gave the following table for the possible numerical
invariants of $D$.

\begin{table}[ht!]
\centering
\begin{tabular}{|c||c|c|c|r|} \hline 
Type &  $e$ & $d$ & $\eta$ &  char $p >0$ \\  \hline \hline
I & $e_{0}$ & $1$ & $1$ &  $e \mid 2$ \\
II & $e_{0}$ & $2$ & $\frac{3}{4}$ &  $2e \mid 2$ \\
III & $e_{0}$ & $2$ & $\frac{1}{4}$ &  $e \mid 2$ \\
IV & $2e_{0}$ & $d$ & $\frac{1}{2}$ & $e_{0}d \mid 2$\\ \hline
\end{tabular}
\end{table}

\noindent
Using this list, we get the following detailed list.

$$
\begin{array}{|c||c|c|c|c|c|c|}
\hline
\text{Type} & e & e_0 & d & \eta & \dim_{{\bf Q}}D & \rho(A)\\
\hline
\text{(I-i)} & 1 & 1 & 1 & 1 & 1 & 1\\
\hline
\text{(I-ii)} & 2 & 2 & 1 & 1 & 2 & 2\\
\hline
\text{(II)} & 1 & 1 & 2 &\frac 34 & 4 & 3\\
\hline
\text{(III-i)} & 1 & 1 & 2 &\frac 14 & 4 & 1\\
\hline
\text{(III-ii)} & 2 & 2 & 2 & \frac 14 & 8 & 2\\
\hline
\text{(IV-i)} & 2 & 1 & 1 & \frac 12 & 2 & 1\\
\hline
\text{(IV-ii)} & 2 & 1 & 2 & \frac 12 & 8 & 4\\
\hline
\text{(IV-iii)} & 4 & 2 & 1 & \frac 12 & 4 & 2\\
\hline
\end{array}
$$

%
%
%
%
%
%
%

We will show that the cases (III-ii), (IV-ii) and (IV-iii) cannot occur for a simple abelian surface $A$.
We denote the $p$-adic Tate module of $A$ by $T_{p}(A)$.
First we show the following lemma (see also \cite{M}, Section 19, Theorem 3).

\lemma\label{injective}
{Let $A$ be a simple abelian surface. Then, the natural homomorphism 
$$
{\rm End} (A)\otimes_{{\bf Z}}{\bf Z}_{p} \longrightarrow {\rm End}_{{\bf Z}_{p}}(T_{p}(A))
$$
is injective}.
\proof{In dimension 2, $A$ is supersingular if and only if the $p$-rank of $A$
is 0. Since $A$ is simple, $A$ is not supersingular  (cf. Oort \cite{Oo}). 
Therefore,
the rank of $T_{p}(A)$ is either 1 or 2. Since $A$ is simple, the kernel of a
non-zero endomorphism $f$ is a finite group scheme. Therefore, for a large
positive integer m, the induced homomorphism $f: A_{p^{m}} \longrightarrow A_{p^{m}}$
is not the zero-map. Therefore, the natural homomorphism ${\rm End} (A)\otimes_{{\bf Z}}{\bf Z}_{p} \longrightarrow {\rm End}_{{\bf Z}_{p}}(T_{p}(A))
$ is injective.
\qed}

\begin{lemma}
In the list above, the cases (III-ii), (IV-ii) and (IV-iii) cannot occur.
\end{lemma}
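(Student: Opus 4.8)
The plan is to turn Lemma~\ref{injective} into a numerical bound on $\dim_{{\bf Q}}D$ that already excludes the two cases with $\dim_{{\bf Q}}D=8$, and then to rule out the one remaining borderline case by exploiting that its division algebra is commutative.

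First I would pin down the $p$-adic Tate module. Since $A$ is a simple abelian surface it is not supersingular (Oort~\cite{Oo}), so its $p$-rank $f$ equals $1$ or $2$; because $k$ is algebraically closed this means $A_{p^{m}}\cong({\bf Z}/p^{m}{\bf Z})^{f}$ for all $m$, hence $T_{p}(A)\cong{\bf Z}_{p}^{f}$ and ${\rm End}_{{\bf Z}_{p}}(T_{p}(A))\cong M_{f}({\bf Z}_{p})$. Tensoring the injection of Lemma~\ref{injective} with ${\bf Q}_{p}$ then yields an injective homomorphism of ${\bf Q}_{p}$-algebras
$$
D\otimes_{{\bf Q}}{\bf Q}_{p}\;\hookrightarrow\;M_{f}({\bf Q}_{p}),
$$
so that $\dim_{{\bf Q}}D=\dim_{{\bf Q}_{p}}(D\otimes_{{\bf Q}}{\bf Q}_{p})\leq f^{2}\leq 4$.

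Now I would read the conclusions off Mumford's table. Cases (III-ii) and (IV-ii) have $\dim_{{\bf Q}}D=8>4$, which contradicts the bound just obtained, so they are impossible. Case (IV-iii) has $\dim_{{\bf Q}}D=4$: the bound then forces $f=2$ and, by equality of dimensions, makes the injection $D\otimes_{{\bf Q}}{\bf Q}_{p}\hookrightarrow M_{2}({\bf Q}_{p})$ an isomorphism. But in case (IV-iii) one has $d=1$, i.e.\ $D=K$ is a commutative field (a quartic CM field), so $D\otimes_{{\bf Q}}{\bf Q}_{p}$ is commutative whereas $M_{2}({\bf Q}_{p})$ is not --- a contradiction. (Equivalently, one may avoid mentioning $f$: a commutative subalgebra of $M_{2}({\bf Q}_{p})$ has ${\bf Q}_{p}$-dimension at most $2$, so the $4$-dimensional algebra $K\otimes_{{\bf Q}}{\bf Q}_{p}$ cannot embed.) Hence (IV-iii) cannot occur either, and the lemma follows.

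The argument is short, and the only genuinely delicate point is case (IV-iii): there the crude estimate $\dim_{{\bf Q}}D\leq 4$ does not suffice by itself, and one must bring in the commutativity of $D=K$ (equivalently, the fact that maximal commutative subalgebras of the $2\times 2$ matrix algebra over ${\bf Q}_{p}$ have dimension $2$). Everything else is a dimension count against the table.
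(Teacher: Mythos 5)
Your proof is correct and takes essentially the same route as the paper's: both use the injectivity lemma to embed $D\otimes_{\mathbf{Q}}\mathbf{Q}_{p}$ into the endomorphism algebra of the Tate module, whose $\mathbf{Q}_{p}$-dimension is $f^{2}\leq 4$ (killing the two $8$-dimensional cases), and both dispose of (IV-iii) by pitting the commutativity of $D=K$ against the non-commutativity of the $2\times 2$ matrix algebra. The only difference is cosmetic: you phrase the last step as a bound on commutative subalgebras of $M_{2}(\mathbf{Q}_{p})$, while the paper phrases it as a forced isomorphism of equal-dimensional algebras.
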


\begin{proof}
In cases (III-ii) and (IV-ii), we have 
$\dim_{{\bf Q}_{p}}D\otimes_{{\bf Q}}{\bf Q}_{p} = 8$. On the other hand,
$\dim_{{\bf Q}_{p}}{\rm End}_{{\bf Q}_{p}}(T_{p}(A)\otimes_{{\bf Z}_{p}} {\bf Q}_{p})$ is equal to 1 or 4,
according to the $p$-rank of $A$ = 1 or 2, which is impossible by Lemma \ref{injective}.
In case (IV-iii), since we have $\dim_{{\bf Q}}D = 4$, the $p$-rank of $A$ should
be 2 and $\dim_{{\bf Q}_{p}}{\rm End}_{{\bf Q}_{p}}(T_{p}(A)\otimes_{{\bf Z}_{p}} {\bf Q}_{p}) = 4$, and we have 
$D\otimes_{\bf Q}{{\bf Q}_{p}}\cong {\rm End}_{{\bf Q}_{p}}(T_{p}(A))\otimes_{{\bf Z}_{p}} {\bf Q}_{p}$. 
However, $D\otimes_{\bf Q}{{\bf Q}_{p}}$ is commutative and 
${\rm End}_{{\bf Q}_{p}}(T_{p}(A))\otimes_{{\bf Z}_{p}} {\bf Q}_{p}$ is non-commutative, a contradiction.
\end{proof}

By the list above, we have the following corollary.

\begin{corollary}
\label{Picard}
For simple abelian surfaces, we have $\rho (A) \leq 3$.
\end{corollary}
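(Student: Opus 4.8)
The plan is straightforward: the corollary asserts $\rho(A)\le 3$ for a simple abelian surface $A$, and this should follow immediately by inspecting the detailed list of numerical invariants established above, once the cases (III-ii), (IV-ii), (IV-iii) have been eliminated by the preceding lemma. So the first thing I would do is recall that, by the preceding lemma, of the eight a priori possibilities in the detailed table, only the six cases (I-i), (I-ii), (II), (III-i), (IV-i), (IV-iii) survive — wait, (IV-iii) is excluded, so the survivors are (I-i), (I-ii), (II), (III-i), (IV-i), (IV-ii)? No: (IV-ii) is also excluded. The survivors are (I-i), (I-ii), (II), (III-i), (IV-i). Then I would simply read off the last column: the values of $\rho(A)$ occurring are $1, 2, 3, 1, 1$ respectively, all of which are $\le 3$.

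Concretely, here is how I would write it. First, invoke the previous lemma to discard (III-ii), (IV-ii) and (IV-iii), which are precisely the rows of the detailed table with $\rho(A)=4$ (namely (IV-ii)) or larger — actually (III-ii) has $\rho(A)=2$ and (IV-iii) has $\rho(A)=2$, so the only row with $\rho=4$ is (IV-ii). Thus the real content is that the one case with Picard number $4$ is eliminated. Second, observe that in each of the remaining five rows the entry in the $\rho(A)$ column is at most $3$ (with equality only in the Type II case). This gives the bound $\rho(A)\le 3$ directly, since every simple abelian surface in characteristic $p$ falls into exactly one of the rows of the table by Mumford's classification. That completes the proof.

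I do not expect any genuine obstacle here — this is a bookkeeping corollary of the lemma and the table. The only subtlety worth a sentence is making sure the detailed table really is exhaustive for simple abelian surfaces: this is where one uses that $A$ is not supersingular (as noted in the proof of Lemma~\ref{injective}, via Oort), so that $A$ has positive $p$-rank, but in fact the detailed list was already derived from Mumford's Type I--IV table together with the constraints on $e, d$, and $\eta$, so exhaustiveness is inherited from there. Hence the proof is essentially one line: "Immediate from the detailed list above, together with the preceding lemma which rules out the only case ((IV-ii)) having $\rho(A)=4$."

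Thus I would write:

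\begin{proof}
By the previous lemma, the cases (III-ii), (IV-ii) and (IV-iii) do not occur, so a simple abelian surface $A$ belongs to one of the rows (I-i), (I-ii), (II), (III-i), (IV-i) of the detailed list. Inspecting the last column, the Picard number $\rho(A)$ equals $1$, $2$, $3$, $1$ or $1$ respectively. In every case $\rho(A)\le 3$.
\end{proof}
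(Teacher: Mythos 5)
Your proof is correct and matches the paper's intended argument exactly: the corollary is read off from the detailed table once the lemma eliminating cases (III-ii), (IV-ii) and (IV-iii) is applied, with (IV-ii) being the sole row where $\rho(A)=4$. Your cleaned-up final paragraph is precisely the bookkeeping the paper leaves implicit in the phrase ``By the list above.''
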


Note how this fits together  with the classic result of Shioda--Mitani \cite{SM}
that a complex abelian surface $A$ with $\rho(A)=4$ is isomorphic
to a product of elliptic curves.

\proposition\label{structure}
{Let $A$ be a simple abelian surface with an automorphism $\sigma$
of order 3. Then, the structure of the endomorphism algebra 
${\rm End}^{0}(A)$ of $A$ is one of the following.

(i) A division algebra over ${\bf Q}$ which contains ${\bf Q}(\sigma)$.

(ii) ${\rm End}^{0}(A) = {\bf Q}(\sigma)$ with 
$K_{0} = {\bf Q}$ and $K = {\bf Q}(\sigma)$.
}
\proof{This follows from the above classification of division algebras.
\qed}

\section{Simple abelian surfaces with automorphism of order 3}
\label{s:simple}

We shall now start working towards the proof of Theorem \ref{thm:simple}.
First comes the ordinarily condition imposed by automorphisms of order 3:

\begin{proposition}
Let $A$ be a simple abelian surface with an automorphism
$\sigma$ of order 3. Assume that $p \neq 3$ and that $\sigma$ is not a translation.
Then, $A$ is an ordinary abelian surface.
\end{proposition}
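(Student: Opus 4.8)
The plan is to use the action of $\sigma$ on the tangent space $V = \mathrm{H}^0(A,\Omega^1_A)^\vee \cong k^2$ at the fixed origin (after composing with a translation, we may assume $\sigma$ fixes $0$, using that $\sigma$ is not a translation and that an order-$3$ automorphism of $A$ decomposes as a group automorphism followed by a translation). Since $\sigma^3 = \mathrm{id}$ and $p\neq 3$, the induced linear map $d\sigma$ on $V$ has order dividing $3$ and is diagonalizable with eigenvalues among the cube roots of unity $1,\omega,\omega^2$ in $k$ (these are distinct because $p\neq 3$). The key point will be to argue that $d\sigma$ cannot be the identity: if it were, $\sigma$ would act trivially on $\mathrm{H}^0(A,\Omega^1_A)$, hence on all of $\mathrm{H}^0(A,\Omega^i_A)$, and — combined with the fact that $\sigma - \mathrm{id}$ is an isogeny (its kernel is finite, since otherwise a positive-dimensional subgroup would be fixed, forcing $A$ non-simple or $\sigma$ a translation) — this produces a contradiction, e.g. with the holomorphic Lefschetz-type / Woods Hole fixed point count, or more elementarily by noting $\sigma$ would then act trivially on a formal/étale neighbourhood structure incompatible with $\sigma$ having finite order $>1$ and nonempty finite fixed locus. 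So $d\sigma$ has at least one nontrivial eigenvalue $\omega$ (or $\omega^2$).

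Next I would pass to the action of $\sigma^*$ on crystalline or de Rham cohomology, or more concretely on the Dieudonné module of the $p$-divisible group $A[p^\infty]$. The automorphism $\sigma$ makes $\mathbb{Z}[\omega]=\mathbb{Z}[\sigma]$ act on $A$, hence on $A[p^\infty]$ and on its (contravariant) Dieudonné module $\mathbb{D}$, a free $W(k)$-module of rank $4$. Since $p\neq 3$ is either split or inert in $\mathbb{Z}[\omega]$, the cyclotomic polynomial $\Phi_3 = T^2+T+1$ splits into distinct linear factors over $W(k)$ when $p\equiv 1 \bmod 3$, and stays irreducible (but separable) when $p\equiv -1\bmod 3$. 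In the split case $\mathbb{D}$ decomposes into eigenspaces for $\omega,\omega^2$; these are swapped by the Verschiebung/Frobenius-Rosati structure, and one checks that the Hodge filtration (the kernel of $F$ on $\mathbb{D}/p\mathbb{D}$, which is $2$-dimensional) must then be stable and split so that $A$ is ordinary. In the inert case $p\equiv -1\bmod 3$ the same reasoning applies after base change, or one argues directly that $\mathbb{Z}[\omega]\otimes\mathbb{Z}_p$ is the unramified quadratic extension of $\mathbb{Z}_p$ acting on $\mathbb{D}$, forcing the Newton slopes to be stable under this action, which rules out the slope-$1/2$ (supersingular) possibility and also the $p$-rank $1$ (slopes $0,1$ each with multiplicity... — actually $p$-rank $1$ gives slopes $0,1/2,1/2,1$ or $0,0,1,1$? no: $p$-rank $1$ in dimension $2$ gives a local-local part of rank $2$, slopes all $1/2$, plus slopes $0$ and $1$) — in any case the unramified quadratic action permutes the isoclinic pieces compatibly and excludes everything except the ordinary ($p$-rank $2$) case.

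A cleaner way to organize the last step, which I would actually write down: by Corollary \ref{Picard} (or rather Proposition \ref{structure}), $\mathrm{End}^0(A)$ contains $\mathbb{Q}(\sigma)=\mathbb{Q}(\omega)$, an imaginary quadratic field. If $A$ were not ordinary, then since $A$ is simple it is not supersingular (cf. Oort, as used in Lemma \ref{injective}), so $A$ has $p$-rank exactly $1$. By the Corollary following Proposition \ref{non-isogenous}, together with the claim in Lemma \ref{injective} about $T_p(A)$, an abelian surface of $p$-rank $1$ has $T_p(A)$ of rank $1$, so $\mathrm{End}(A)\otimes\mathbb{Z}_p \hookrightarrow \mathrm{End}_{\mathbb{Z}_p}(T_p(A)) = \mathbb{Z}_p$; but then $\mathbb{Z}_p[\omega] = \mathbb{Q}(\omega)\otimes\mathbb{Z}_p \cap \mathrm{End}(A)\otimes\mathbb{Z}_p$ must embed in $\mathbb{Z}_p$, which is impossible when $p$ is inert in $\mathbb{Q}(\omega)$ (i.e. $p\equiv -1\bmod 3$), and when $p$ splits it forces the two primes above $p$ to act through the single rank-one étale piece in a way that collapses $\mathbb{Q}(\omega)$ — I would need to check that this also yields a contradiction, using that $\sigma$ acts faithfully on $A[p^\infty]$ including its local part. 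The main obstacle I anticipate is precisely this split case $p\equiv 1\bmod 3$: there $\mathbb{Z}_p[\omega]\cong \mathbb{Z}_p\times\mathbb{Z}_p$ \emph{does} embed in $\mathrm{End}_{\mathbb{Z}_p}$ of a rank-$2$ Tate module, so the contradiction cannot come from $T_p$ alone but must invoke the local-local part of $A[p^\infty]$ — the rank-$2$ local-local summand present when the $p$-rank is $1$ — and show that $\sigma$ acting on it with the two conjugate eigen-behaviours is incompatible with that summand being isoclinic of slope $1/2$ and $\sigma$-stable. Handling the Dieudonné module of the local-local part carefully is where the real work lies.
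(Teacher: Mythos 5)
Your final paragraph (the ``cleaner way'') is, in substance, exactly the paper's proof: exclude $p$-rank $0$ because a simple abelian surface cannot be supersingular (Oort), and exclude $p$-rank $1$ by playing the quadratic order $\mathbb{Z}[\sigma]\subset\mathrm{End}(A)$ against the rank-one Tate module via the injectivity of $\mathrm{End}(A)\otimes_{\mathbb{Z}}\mathbb{Z}_p\to\mathrm{End}_{\mathbb{Z}_p}(T_p(A))$ (Lemma \ref{injective} combined with Proposition \ref{structure}). However, you leave a genuine gap precisely where you announce that ``the real work lies'': you treat the obstruction as ring-theoretic (whether $\mathbb{Z}_p[\omega]$ embeds as a ring into $\mathbb{Z}_p$), which only settles the inert case $p\equiv -1 \bmod 3$, and you then claim the split case $p\equiv 1\bmod 3$ forces an analysis of the local--local part of $A[p^\infty]$. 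That detour is unnecessary and rests on a misreading of what the injectivity lemma provides. The obstruction is purely a $\mathbb{Z}_p$-rank count: since $\sigma-\mathrm{id}_A$ is an isogeny on the simple surface $A$, the minimal polynomial of $\sigma$ is $x^2+x+1$, so $\mathbb{Z}[\sigma]$ is free of rank $2$ over $\mathbb{Z}$ and $\mathbb{Z}[\sigma]\otimes\mathbb{Z}_p$ is a free $\mathbb{Z}_p$-module of rank $2$ inside $\mathrm{End}(A)\otimes\mathbb{Z}_p$, whereas $\mathrm{End}_{\mathbb{Z}_p}(T_p(A))\cong\mathbb{Z}_p$ has rank $1$ when the $p$-rank is $1$. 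No injective $\mathbb{Z}_p$-linear map from a free module of rank $2$ into one of rank $1$ exists (tensor with $\mathbb{Q}_p$ and count dimensions). Whether $p$ splits or is inert in $\mathbb{Q}(\omega)$ --- i.e.\ whether $\mathbb{Z}_p[\omega]$ is a domain or $\mathbb{Z}_p\times\mathbb{Z}_p$ --- is irrelevant; the projections $\mathbb{Z}_p\times\mathbb{Z}_p\to\mathbb{Z}_p$ you are implicitly worried about are not injective and so do not threaten the contradiction.

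With that repair your argument closes and coincides with the paper's. Your first two paragraphs then become superfluous: the tangent-space analysis of $d\sigma$ and the slope/Dieudonn\'e-module discussion (which in any case is only sketched, with the decisive steps asserted rather than proved) are not needed for the statement.
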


\begin{proof}
If the $p$-rank of $A$ is 0, then in case of dimension 2 $A$ is a supersingular
abelian surface as we have used above. 
Therefore, $A$ is not simple (cf. Oort \cite{Oo}).
Assume the $p$-rank of $A$ is equal to 1. Then, $T_{p}(A)$ has rank 1 over 
${\bf Z}_{p}$ and so ${\rm End}(T_{p}(A))\otimes_{{\bf Z}_{p}}{\bf Q}_{p}$
is 1-dimensional over ${\bf Q}_{p}$, which contradicts Proposition \ref{structure}
and Lemma \ref{injective}. Hence, the p-rank of $A$ is $2$, that is, $A$ is ordinary
as claimed.
\end{proof}

We use the Harder-Narashimhan theorem frequently.

\begin{theorem}
[Harder-Narashimhan \cite{HN}, Proposition 3.2.1]
Let $X$ be a nonsingular projective variety on which a finite group $G$
acts. Let $\ell$ be a prime number which is prime to both $p$ and the order of $G$.
Then, the \'etale cohomology ${\rm H}^{i}(X/G, {\bf Q}_{\ell})$ is isomorphic to
the subspace ${\rm H}^{i}(X, {\bf Q}_{\ell})^{G}$ of $G$-invariants in 
${\rm H}^{i}(X, {\bf Q}_{\ell})$:
$$
{\rm H}^{i}(X/G, {\bf Q}_{\ell}) \cong {\rm H}^{i}(X, {\bf Q}_{\ell})^{G}.
$$
\end{theorem}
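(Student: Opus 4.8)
The plan is to realise the quotient morphism as a \emph{finite} map and to reduce the statement to two facts about sheaves on $X/G$: that a finite push-forward has no higher direct images, and that the push-forward of a constant sheaf has the constant sheaf as its $G$-invariant subsheaf. Write $\pi\colon X \to Y := X/G$ for the quotient morphism; since $X$ is quasi-projective and $G$ is finite, $Y$ exists as a scheme and $\pi$ is finite and surjective. I would first prove the isomorphism with finite coefficients $\mathbf{Z}/\ell^{n}\mathbf{Z}$ and then pass to the inverse limit and tensor with $\mathbf{Q}_{\ell}$.

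First I would record that, $\pi$ being finite, one has $R^{q}\pi_{*}\mathcal{F} = 0$ for every $q>0$ and every \'etale sheaf $\mathcal{F}$ on $X$: this can be checked on the strict henselisation $\mathcal{O}_{Y,\bar y}^{\mathrm{sh}}$ of a geometric point $\bar y$ of $Y$, over which the base change of $X$ is a finite product of strictly henselian local schemes, and those have vanishing higher \'etale cohomology. Consequently the Leray spectral sequence for $\pi$ degenerates, giving a natural isomorphism ${\rm H}^{i}(Y, \pi_{*}\mathcal{F}) \cong {\rm H}^{i}(X, \mathcal{F})$; for $\mathcal{F} = \mathbf{Z}/\ell^{n}\mathbf{Z}$, equipped with its canonical $G$-linearisation coming from the action of $G$ on $X$ over $Y$, this is an isomorphism of $\mathbf{Z}/\ell^{n}\mathbf{Z}[G]$-modules.

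Next I would compute $(\pi_{*}\mathbf{Z}/\ell^{n}\mathbf{Z})^{G}$. The stalk of $\pi_{*}\mathbf{Z}/\ell^{n}\mathbf{Z}$ at $\bar y$ is the free $\mathbf{Z}/\ell^{n}\mathbf{Z}$-module on the finite set $\pi^{-1}(\bar y)$, since the product decomposition above has exactly one connected factor for each point of the fibre; the group $G$ acts by permuting this fibre, and because $Y = X/G$ the fibre is a single $G$-orbit, so the invariants of the stalk form the diagonal copy of $\mathbf{Z}/\ell^{n}\mathbf{Z}$. As $(-)^{G}$ is a finite limit it is computed stalkwise, so the unit of adjunction yields an isomorphism $\mathbf{Z}/\ell^{n}\mathbf{Z} \stackrel{\sim}{\longrightarrow} (\pi_{*}\mathbf{Z}/\ell^{n}\mathbf{Z})^{G}$ of sheaves on $Y$. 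To finish I would invoke the hypothesis $\ell \nmid \#G$: then $\#G$ is invertible in $\mathbf{Z}/\ell^{n}\mathbf{Z}$, so the averaging operator $e = \frac{1}{\#G}\sum_{g\in G} g$ is a functorial idempotent whose image is the $G$-invariants, and being functorial it commutes with ${\rm H}^{i}(Y, -)$; hence ${\rm H}^{i}(Y, \pi_{*}\mathbf{Z}/\ell^{n}\mathbf{Z})^{G} = {\rm H}^{i}(Y, (\pi_{*}\mathbf{Z}/\ell^{n}\mathbf{Z})^{G}) = {\rm H}^{i}(Y, \mathbf{Z}/\ell^{n}\mathbf{Z})$. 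Chaining this with the isomorphism of the previous paragraph gives ${\rm H}^{i}(X, \mathbf{Z}/\ell^{n}\mathbf{Z})^{G} \cong {\rm H}^{i}(X/G, \mathbf{Z}/\ell^{n}\mathbf{Z})$, and passing to $\varprojlim_{n}$ and then $\otimes_{\mathbf{Z}_{\ell}} \mathbf{Q}_{\ell}$ gives the claim.

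The individual ingredients are all standard, so the obstacle is mild; the points that require genuine care are the stalk computation in the third paragraph — one must combine the description of the stalks of a finite push-forward with the fact that the fibres of $X \to X/G$ are single $G$-orbits — and the observation that the argument really uses $\ell \nmid \#G$ (without it extra $\ell$-torsion can appear on $X/G$), whereas $\ell \neq p$ is needed only to have a well-behaved theory of \'etale cohomology in characteristic $p$ in the first place.
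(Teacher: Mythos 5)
The paper offers no proof of this statement: it is quoted verbatim from Harder--Narasimhan \cite{HN} (Proposition 3.2.1), so there is nothing internal to compare your argument against. Your proposal is a correct and complete proof, and it is in fact the standard one: finiteness of $\pi\colon X\to X/G$ kills the higher direct images and collapses Leray to ${\rm H}^{i}(X/G,\pi_{*}\mathcal F)\cong {\rm H}^{i}(X,\mathcal F)$; the stalkwise identification $(\pi_{*}\mathbf{Z}/\ell^{n})^{G}\cong \mathbf{Z}/\ell^{n}$ via transitivity of $G$ on geometric fibres is right (and you correctly flag it as the step needing care, since stabilizers act trivially on the constant stalk); and the averaging idempotent $e=\frac{1}{\#G}\sum_{g}g$, legitimate precisely because $\ell\nmid\#G$, lets you commute $(-)^{G}$ with ${\rm H}^{i}(X/G,-)$ by splitting off a direct summand. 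The passage to $\varprojlim_{n}$ and $\otimes_{\mathbf{Z}_{\ell}}\mathbf{Q}_{\ell}$ is unproblematic since everything in sight is a direct summand. Your closing remark is also accurate: the hypothesis $\ell\nmid\#G$ is the one the argument genuinely consumes, while $\ell\neq p$ only ensures the $\ell$-adic theory is the right cohomology to be working with. Two cosmetic points: the fibre of $\pi$ over $\mathcal{O}_{Y,\bar y}^{\mathrm{sh}}$ is a finite \emph{disjoint union} of strictly henselian local schemes (the spectrum of a product of local rings), and for the limit step one should note that properness of $X$ and $X/G$ gives the finiteness needed for $\varprojlim$ to behave; neither affects the correctness of the argument.
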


Applied to quotients of projective surfaces, we obtain the following:

\begin{lemma}
\label{injectiveness}
Let $X$ be a nonsingular projective surface on which a finite group $G$
acts. Let $\ell$ be a prime number which is prime to both $p$ and the order of $G$.
Moreover, assume $G$ has only isolated fixed points, and
let $\varphi : Y \longrightarrow X/G$ be a minimal resolution of $X/G$.
Then, we have an isomorpism
$$
\varphi^{*} :{\rm H}^{1}(X/G, {\bf Q}_{\ell}) \cong {\rm H}^{1}(Y, {\bf Q}_{\ell})
$$
and an injective homomorphism  
$$
\varphi^{*} :{\rm H}^{2}(X/G, {\bf Q}_{\ell}) \longrightarrow {\rm H}^{2}(Y, {\bf Q}_{\ell}).
$$ 
\end{lemma}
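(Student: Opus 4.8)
The plan is to descend to the Leray spectral sequence of the minimal resolution $\varphi: Y \to X/G$ and to exploit that $X/G$ has only rational (quotient) singularities. First I would fix the geometric set-up. Since $G$ acts with isolated fixed points, $X/G$ is a normal projective surface whose singular locus is the finite set $Z = \{z_{1}, \ldots, z_{m}\}$ of images of the fixed points, and each $z_{j}$ is a quotient singularity. The minimal resolution $\varphi$ is an isomorphism over the smooth locus $X/G \setminus Z$, and $Y$ is a smooth projective surface. Because $X/G$ is normal and $\varphi$ is proper and birational, Zariski's Main Theorem shows that all fibers of $\varphi$ are connected, so $R^{0}\varphi_{*}{\bf Q}_{\ell} = {\bf Q}_{\ell}$.

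The key local input is that a surface quotient singularity is a rational singularity (certainly in the tame case, which is the only one needed here since $|G| = 3$ and $p \neq 3$ in all our applications). Hence over each $z_{j}$ the reduced exceptional divisor $E_{j} = \varphi^{-1}(z_{j})$ is a connected configuration of smooth rational curves whose dual graph is a tree; in particular ${\rm H}^{0}(E_{j}, {\bf Q}_{\ell}) = {\bf Q}_{\ell}$, ${\rm H}^{1}(E_{j}, {\bf Q}_{\ell}) = 0$, and ${\rm H}^{2}(E_{j}, {\bf Q}_{\ell})$ has rank the number of components. By proper base change, $R^{q}\varphi_{*}{\bf Q}_{\ell}$ is, for $q \geq 1$, a skyscraper sheaf supported on $Z$ with stalk ${\rm H}^{q}(E_{j}, {\bf Q}_{\ell})$ at $z_{j}$. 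Therefore $R^{1}\varphi_{*}{\bf Q}_{\ell} = 0$, while $R^{2}\varphi_{*}{\bf Q}_{\ell}$ is concentrated on $Z$.

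Finally I would feed this into the Leray spectral sequence $E_{2}^{s,t} = {\rm H}^{s}(X/G, R^{t}\varphi_{*}{\bf Q}_{\ell}) \Rightarrow {\rm H}^{s+t}(Y, {\bf Q}_{\ell})$, whose bottom-row edge homomorphisms ${\rm H}^{s}(X/G, {\bf Q}_{\ell}) \to {\rm H}^{s}(Y, {\bf Q}_{\ell})$ are precisely the pullbacks $\varphi^{*}$. Since $R^{1}\varphi_{*}{\bf Q}_{\ell} = 0$, the entries $E_{2}^{0,1}$ and $E_{2}^{1,1}$ vanish; combined with $R^{0}\varphi_{*}{\bf Q}_{\ell} = {\bf Q}_{\ell}$, no differential enters or leaves $E_{2}^{1,0}$ or $E_{2}^{2,0}$, so both survive to $E_{\infty}$. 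Reading off the induced filtration on ${\rm H}^{i}(Y, {\bf Q}_{\ell})$ in degrees $1$ and $2$ then gives an isomorphism $\varphi^{*}: {\rm H}^{1}(X/G, {\bf Q}_{\ell}) \stackrel{\sim}{\longrightarrow} {\rm H}^{1}(Y, {\bf Q}_{\ell})$ and a short exact sequence $0 \to {\rm H}^{2}(X/G, {\bf Q}_{\ell}) \stackrel{\varphi^{*}}{\longrightarrow} {\rm H}^{2}(Y, {\bf Q}_{\ell}) \to Q \to 0$ whose cokernel $Q$ is a subquotient of $\bigoplus_{j} {\rm H}^{2}(E_{j}, {\bf Q}_{\ell})$; in particular $\varphi^{*}$ is injective on ${\rm H}^{2}$. (Combined with the Harder--Narasimhan theorem above, the first isomorphism also yields ${\rm H}^{1}(Y, {\bf Q}_{\ell}) \cong {\rm H}^{1}(X, {\bf Q}_{\ell})^{G}$.)

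The step I expect to be the crux is the local one: pinning down the exceptional fibers $E_{j}$ precisely enough to conclude $R^{1}\varphi_{*}{\bf Q}_{\ell} = 0$, i.e.\ the fact that surface quotient singularities are rational and hence resolved by trees of smooth rational curves. Everything after that is a formal manipulation of the Leray spectral sequence.
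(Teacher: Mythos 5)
Your proof is correct, and it rests on exactly the same geometric input as the paper's: the quotient singularities are rational (in the tame case, which is what both you and the paper implicitly use via the remark that the trace argument for rationality needs $|G|$ prime to $p$), so each exceptional fibre $E_j$ is a tree of smooth rational curves with ${\rm H}^{1}(E_j,{\bf Q}_{\ell})=0$. Where you differ is in the formal machinery used to propagate this local fact to the global statement. You run the Leray spectral sequence of $\varphi$, using proper base change to identify $R^{q}\varphi_{*}{\bf Q}_{\ell}$ ($q\geq 1$) as skyscrapers with stalks ${\rm H}^{q}(E_j,{\bf Q}_{\ell})$; the vanishing $R^{1}\varphi_{*}{\bf Q}_{\ell}=0$ then kills $E_{2}^{0,1}$ and $E_{2}^{1,1}$, and reading off the edge maps gives the isomorphism on ${\rm H}^{1}$ and the injection on ${\rm H}^{2}$. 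The paper instead compares the two spaces through the common open piece $Y\setminus E\cong X/G\setminus W$, writing down the commutative ladder of long exact sequences in compactly supported \'etale cohomology for the pairs $(X/G,W)$ and $(Y,E)$, and uses ${\rm H}_c^{1}(E,{\bf Q}_{\ell})=0$ (the same tree-of-$\mathbf{P}^{1}$'s fact) together with the vanishing of ${\rm H}_c^{1}(W)$ and ${\rm H}_c^{2}(W)$ to conclude by a diagram chase. The two arguments are essentially equivalent in content; your spectral-sequence version has the small advantage of also exhibiting the cokernel of $\varphi^{*}$ on ${\rm H}^{2}$ as a subquotient of $\bigoplus_{j}{\rm H}^{2}(E_j,{\bf Q}_{\ell})$, while the paper's ladder argument is more elementary in that it avoids any spectral sequence. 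No gap.
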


\begin{proof}
Let $W$ be the set of singular points of $X/G$, and $E$ be the exceptional divisor
of $\varphi$ on $Y$. Then, we have an isomorphism 
$$
\varphi|_{Y\setminus E} : Y \setminus E \longrightarrow X/G \setminus W.
$$
Therefore, we have an isomorphism ${\rm H}_c^{i}(A/G \setminus W, {\bf Q}_{\ell}) \cong
{\rm H}_c^{i}(Y \setminus E, {\bf Q}_{\ell})$. 
There is a commutative diagram of long exact sequences 
of \'etale cohomology groups with compact support 
whose coefficients are in ${\bf Q}_{\ell}$ (cf. Milne \cite{Mi}):
$$
\begin{array}{ccccccccc}
\rightarrow &{\rm H}_c^{i -1}(W, {\bf Q}_{\ell}) &\rightarrow &
{\rm H}_c^{i}(A/G \setminus W, {\bf Q}_{\ell})&
\rightarrow &{\rm H}_c^{i}(A/G, {\bf Q}_{\ell})&
\rightarrow &{\rm H}_c^{i }(W, {\bf Q}_{\ell})& \rightarrow\\
  & \downarrow & & \downarrow & & \downarrow & & \downarrow   & \\
\rightarrow &{\rm H}_c^{i -1}(E, {\bf Q}_{\ell}) &\rightarrow &
{\rm H}_c^{i}(Y \setminus E, {\bf Q}_{\ell})&
\rightarrow &{\rm H}_c^{i}(Y, {\bf Q}_{\ell})&
\rightarrow &{\rm H}_c^{i }(E, {\bf Q}_{\ell})& \rightarrow
\end{array}
$$
The singularities of $A/G$ are rational by \cite[p.\ 149]{Pinkham}
(which assumes characteristic zero,
but the trace argument works
in characteristic $p$ as long as the order of $G$ is prime to $p$).
Hence $E$ consists
of trees of ${\bf P}^{1}$'s. Therefore, we have
${\rm H}_c^{1}(E, {\bf Q}_{\ell}) = 0$. We also have
$$
\begin{array}{l}
{\rm H}_c^{1}(W, {\bf Q}_{\ell}) = {\rm H}_c^{2}(W, {\bf Q}_{\ell})= 0, \\
{\rm H}_c^{i}(A/G, {\bf Q}_{\ell}) \cong {\rm H}^{i}(A/G, {\bf Q}_{\ell}) 
\quad (i = 1, 2), \\
{\rm H}_c^{i}(Y, {\bf Q}_{\ell}) \cong {\rm H}^{i}(Y, {\bf Q}_{\ell}) 
\quad (i = 1, 2).
\end{array}
$$
The results follow from these facts. 
\end{proof}

We will also need the following helpful property.

\begin{lemma}
\label{ruled}
Let $A$ be an abelian surface, and $C$ be a nonsingular complete curve
of genus $g \geq 2$.  Then, there exists no non-trivial rational map from $A$ to $C$.
\end{lemma}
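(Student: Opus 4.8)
The plan is to argue by contradiction, exploiting the fact that a non-trivial rational map from a smooth projective surface to a curve is automatically a morphism (curves are absolutely smooth, so rational maps from smooth varieties extend in codimension one, and a map to a projective curve defined away from a finite set extends everywhere). So suppose there were a non-constant morphism $\psi : A \to C$ with $g(C) \geq 2$. After translating we may assume $\psi$ sends the origin $0_A$ to a chosen point $c_0 \in C$.

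First I would pass to the Albanese. By the universal property of the Albanese variety, the morphism $\psi$ (normalized to send $0_A$ to $c_0$) factors through the Albanese map of $C$, i.e.\ through the Jacobian: $\psi = \phi \circ j$ where $j : A \to J(C)$ is a homomorphism of abelian varieties (using that $A$ is already its own Albanese) and $\phi : J(C) \to C$ is induced by $c \mapsto [c - c_0]$, the Abel--Jacobi map. Wait, the Abel--Jacobi map goes the other way; the correct statement is that $\psi$ being a morphism from an abelian variety sending $0$ to $c_0$ induces a homomorphism $A \to J(C)$ whose composition with the Abel--Jacobi embedding $C \hookrightarrow J(C)$ recovers $\psi$ up to the issue that $\psi$ need not be surjective onto $C$. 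Concretely: the image $\psi(A)$ is an abelian subvariety of $J(C)$ (being the image of a homomorphism, up to translation) that is contained in the curve $C \subset J(C)$; since $g \geq 2$, the curve $C$ generates $J(C)$ but is not itself a subgroup, and an abelian subvariety of $J(C)$ contained in $C$ must be a point — giving $\psi$ constant, the desired contradiction.

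Let me restructure to make the key step cleanest. The heart of the matter: \emph{the image of a homomorphism of abelian varieties is an abelian variety, hence has no non-constant morphisms to a curve of genus $\geq 2$.} Step one: reduce to showing there is no non-constant \emph{morphism} $A \to C$, using the extension property of rational maps from smooth surfaces to curves. Step two: translating so that $0_A \mapsto c_0$, use the Abel--Jacobi embedding $\iota : C \hookrightarrow J(C)$ (valid since $g \geq 1$) to get a morphism $\iota \circ \psi : A \to J(C)$ sending $0_A$ to $0_{J(C)}$, which is therefore a homomorphism of abelian varieties (rigidity). Step three: its image $B = (\iota\circ\psi)(A)$ is an abelian subvariety of $J(C)$ lying inside $\iota(C)$. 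Step four: derive a contradiction — an abelian variety of dimension $\geq 1$ cannot embed into a curve $\iota(C) \cong C$ unless it is a point, because a curve of genus $\geq 2$ admits no non-constant map from $\mathbb{P}^1$ and, more to the point, is not itself a translate of an abelian subvariety (its genus would force that subvariety to be all of $J(C)$, but $C \neq J(C)$ when $g \geq 2$). Hence $B$ is a point, so $\psi$ is constant.

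The main obstacle I anticipate is being careful about what is automatic in positive characteristic: the Albanese/rigidity machinery (a morphism of abelian varieties sending identity to identity is a homomorphism) holds over any field, and the Abel--Jacobi embedding $C \hookrightarrow J(C)$ is valid over any algebraically closed field, so these are safe. The one genuinely delicate point is the extension of the rational map to a morphism — one must invoke that $A$ is a smooth surface (so normal) and $C$ is a smooth projective curve, whence the indeterminacy locus is empty (a rational map from a normal variety to a projective variety is defined outside a closed subset of codimension $\geq 2$, and an indeterminacy point on a surface would be blown up to a curve dominating $C$, which is fine, but one then uses valuative criterion at that point to extend since $C$ is complete). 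This is standard, so the proof should be short; I would state it in a couple of sentences and cite the standard references on the Albanese variety and Abel--Jacobi map.
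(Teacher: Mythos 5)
Your proof is correct and follows essentially the same route as the paper: compose with the Abel--Jacobi embedding $C\hookrightarrow J(C)$, use rigidity to obtain a homomorphism $A\to J(C)$ whose image is an abelian subvariety contained in the curve, and conclude it must be a point since a genus $\geq 2$ curve is not (a translate of) an abelian subvariety. The paper's version is just a terser statement of the same argument; your extra care about extending the rational map to a morphism is a reasonable addition but not a difference in method.
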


\begin{proof}
Suppose there exists a non-trivial rational map $f : A \longrightarrow C$. 
Then, by composition, there exists a homomorphism from $A$ to the Jacobian variety $J(C)$
of $C$. Since the homomorphism factors through $C$, it is absurd.
\end{proof}

\begin{lemma}
\label{finite}
Let $A$ be a simple abelian surface with an automorphism
$\sigma$ of order 3. Assume that $p \neq 3$ and that $\sigma$ is not a translation.
Then, $\sigma$ has at least one fixed point and the fixed locus consists
of finitely many points.
\end{lemma}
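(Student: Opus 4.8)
The plan is to separate the two assertions and treat the fixed-point-free case by a direct structural argument, then rule out positive-dimensional fixed loci. First I would suppose $\sigma$ acts without fixed points on $A$. Then the quotient $A/\langle\sigma\rangle$ is smooth, and by the Harder--Narasimhan theorem (applied with $G=\langle\sigma\rangle$, $\ell$ prime to $3p$) we have $\mathrm{H}^i(A/\langle\sigma\rangle,\mathbf{Q}_\ell)\cong \mathrm{H}^i(A,\mathbf{Q}_\ell)^{\langle\sigma\rangle}$; in particular the Euler characteristic of $A/\langle\sigma\rangle$ is $\frac13 e(A)=0$. A smooth projective surface with a fixed-point-free automorphism and vanishing Euler characteristic, quotient being again smooth of Euler characteristic $0$, should be classified: using the Kodaira--Enriques classification of surfaces (valid in characteristic $p$) together with the fact that $A/\langle\sigma\rangle$ receives the abelian surface $A$ as an \'etale cover, one sees $A/\langle\sigma\rangle$ is itself either an abelian or a (quasi-)hyperelliptic surface. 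Alternatively and more cleanly: an \'etale quotient of an abelian variety by a finite group action has a natural structure making the quotient an abelian variety exactly when the action is by translations; if $\sigma$ is not a translation, the quotient still has $q>0$ (its Albanese receives $A$ up to isogeny), so it is never a K3 surface. Either way, the point is that fixed-point-freeness is incompatible with what we need, but strictly speaking the lemma only asserts that a fixed point \emph{exists}, so I would argue: if $\sigma$ had no fixed point, $e(A/\langle\sigma\rangle)=0$, and the standard argument shows $A/\langle\sigma\rangle$ is abelian or hyperelliptic, hence $q\ge 1$, contradicting nothing yet --- so actually I must use simplicity. The clean contradiction: since $\sigma$ is an automorphism of order $3$ of the abelian \emph{variety} $A$ (not just of the variety), write $\sigma = t_a\circ \tau$ with $\tau\in\mathrm{Aut}(A,0)$ and $t_a$ translation by $a$; if $\tau=\mathrm{id}$ then $\sigma=t_a$ is a translation, excluded by hypothesis; hence $\tau\neq\mathrm{id}$ has order dividing $3$, so $\tau$ has finite (nonempty) fixed locus unless... and in fact $1-\tau$ is an isogeny (its kernel is finite because $A$ is simple and $1-\tau\neq 0$ cannot be zero on a positive-dimensional abelian subvariety), so the equation $\sigma(x)=x$, i.e.\ $(1-\tau)(x)=a$, has solutions --- giving a fixed point. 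This last argument is the cleanest route and does not even need the Euler characteristic.

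So the streamlined approach is: (1) reduce to $\tau$, the linear part of $\sigma$, which is a nontrivial automorphism of $(A,0)$ of order dividing $3$; (2) show $1-\tau\in\mathrm{End}(A)$ is an isogeny. For step (2), if $1-\tau$ were not an isogeny, its image would be a proper nonzero abelian subvariety of $A$ (after quotienting by the connected component of the kernel), contradicting simplicity --- so either $1-\tau=0$, forcing $\tau=\mathrm{id}$ (excluded), or $1-\tau$ is an isogeny. Then (3) the fixed points of $\sigma=t_a\circ\tau$ are exactly the solutions of $(1-\tau)(x)=a$, a nonempty coset of $\ker(1-\tau)$; since $1-\tau$ is an isogeny this kernel is a finite group scheme, so the fixed locus is a finite \emph{nonempty} set of points (here one should be slightly careful that "fixed locus" means the reduced fixed scheme, which for a finite-order automorphism with finite kernel of $1-\tau$ is just finitely many closed points; since $p\neq 3$ the group scheme $\ker(1-\tau)$ may be taken \'etale, but even if not, the reduced fixed locus is still finite and nonempty).

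The main obstacle I anticipate is making step (2) airtight in the scheme-theoretic setting of characteristic $p$: one must argue that $\ker(1-\tau)$ is finite purely from simplicity of $A$, being careful that "simple" rules out nontrivial abelian \emph{sub}varieties, so the identity component $(\ker(1-\tau))^0$, being an abelian subvariety, is either $0$ or all of $A$; the latter gives $\tau=\mathrm{id}$. A secondary subtlety is confirming the fixed locus is genuinely finite as a \emph{scheme} (not merely that there are finitely many fixed points with possible nonreduced structure), but since $p\neq 3$ and $\tau$ acts on the tangent space $T_0A$ with eigenvalues that are primitive cube roots of unity (no eigenvalue $1$, else $1-\tau$ degenerates on a subgroup, again contradicting simplicity via the corresponding abelian subvariety), the differential of $1-\tau$ at any fixed point is invertible, so each fixed point is reduced and isolated. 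I would also double-check that the existence of a fixed point is not already subsumed by the earlier Euler-characteristic discussion: if one prefers to avoid the endomorphism argument, the fallback is that a fixed-point-free $\sigma$ yields a smooth quotient with $e=0$ which, being dominated by an abelian surface, must have Kodaira dimension $0$ and hence be abelian or (quasi-)hyperelliptic with $q\ge 1$ --- but as noted this alone does not contradict the hypotheses, so the endomorphism argument giving an actual fixed point is the one I would commit to in the writeup.
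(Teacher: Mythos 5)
Your argument is correct, but it reaches the existence of a fixed point by a genuinely different route than the paper. The paper handles existence by contradiction via surface classification: if $\sigma$ were fixed-point-free, $A/\langle\sigma\rangle$ would be smooth of Kodaira dimension $0$ with $e=0$, hence abelian or hyperelliptic; the abelian case forces $\sigma$ to be a translation, and the hyperelliptic case gives a surjection from $A$ onto the elliptic Albanese variety of the quotient, contradicting simplicity. (Note that your hesitation here --- ``$q\ge 1$ contradicts nothing yet'' --- is unfounded: $q=1$ for the hyperelliptic quotient does yield a surjection $A\to\mathrm{Alb}(A/\langle\sigma\rangle)$ onto an elliptic curve, which simplicity forbids, so the classification route does close.) Your preferred argument instead writes $\sigma=t_a\circ\tau$ with $\tau\in\mathrm{Aut}(A,0)$, uses simplicity to see that $(\ker(1-\tau))^0$ is $0$ or $A$ and hence that $1-\tau$ is an isogeny when $\tau\neq\mathrm{id}$, and then solves $(1-\tau)(x)=a$ by surjectivity of the isogeny; the fixed locus is a nonempty coset of the finite kernel. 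This is cleaner and self-contained: it produces a fixed point constructively rather than by ruling out its absence, and it delivers existence and finiteness in one stroke, whereas the paper's finiteness step (re-centering at a fixed point so that $\sigma$ becomes a homomorphism with $\ker(\sigma-\mathrm{id})$ finite) is essentially your step restated. One small caveat: your side remark that $d\tau$ at $0$ can have no eigenvalue $1$ ``else $1-\tau$ degenerates on a subgroup'' is not airtight in characteristic $p$ --- an eigenvalue $1$ on the tangent space would only make $\ker(1-\tau)$ non-reduced, not positive-dimensional --- but this is immaterial, since the lemma only asserts finiteness of the fixed point set, which follows from $\ker(1-\tau)$ being a finite group scheme.
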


\begin{proof}
If $\sigma$ is fixed-point-free, then the quotient surface 
$A/\langle \sigma \rangle$ is either an abelian
surface or a hyperelliptic surface. If it is an Abelian surface, $\sigma$ must 
be a translation,
which contradicts our assumption. If it is a hyperelliptic surface, then
the Albanese variety $Alb(A/\langle \sigma \rangle)$ is an elliptic curve and 
we have a surjective morphism
from $A$ to $Alb(A/\langle \sigma \rangle)$, which contradicts our  assumption 
that $A$ is simple. Now, we may choose a fixed point of $\sigma$ 
as the zero point of $A$. Then, $\sigma$ is a homomorphism.
Since $A$ is simple, the kernel of the homomorphism $\sigma - id_{A}$
is finite. Therefore, the fixed locus of $\sigma$ is a finite set.
\end{proof}

We denote by $\omega$ a primitive cube root of unity.

\begin{lemma}
\label{eigenvalue}
Let $A$ be a simple abelian surface with an automorphism
$\sigma$ of order 3. Assume that $p \neq 3$ and that $\sigma$ is not a translation.
Then, the eigenvalues of $\sigma$ on the \'etale cohomology group 
${\rm H}^{1}(A, {\bf Q}_{\ell})$ are given by $\omega$, $\omega$,
$\omega^{2}$ and $\omega^{2}$.
\end{lemma}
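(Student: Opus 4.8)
The plan is to pin down the action of $\sigma$ on $\mathrm{H}^1(A,\mathbf{Q}_\ell)$, which is a $4$-dimensional $\mathbf{Q}_\ell$-vector space carrying an action of the order-$3$ automorphism, hence is a representation of the cyclic group $\mathbf{Z}/3$. Since $\sigma^3 = \mathrm{id}$ but $\sigma \neq \mathrm{id}$, the eigenvalues (over $\bar{\mathbf{Q}}_\ell$) lie in $\{1,\omega,\omega^2\}$ and not all of them are $1$. So the only question is the multiplicities: how many $1$'s, how many $\omega$'s, how many $\omega^2$'s. I want to show the multiplicity of $1$ is zero and the multiplicities of $\omega$ and $\omega^2$ are each $2$.

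First I would rule out the eigenvalue $1$. If $1$ occurred as an eigenvalue on $\mathrm{H}^1$, then since $\mathrm{H}^1(A,\mathbf{Q}_\ell) \cong \mathrm{H}^1(A/\langle\sigma\rangle,\mathbf{Q}_\ell) \oplus (\text{non-invariant part})$ by Harder--Narasimhan, the quotient $A/\langle\sigma\rangle$ (which by Lemma \ref{finite} has only finitely many quotient singularities, so its minimal resolution $Y$ satisfies $\mathrm{H}^1(Y,\mathbf{Q}_\ell)\cong\mathrm{H}^1(A/\langle\sigma\rangle,\mathbf{Q}_\ell)$ by Lemma \ref{injectiveness}) would have positive first Betti number. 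Then $Y$ would have nonzero Albanese dimension. Chasing the Albanese: the Albanese of $Y$ receives a dominant map from $A$ (via $A \to A/\langle\sigma\rangle \dashrightarrow Y$ and Albanese functoriality — here I should be a little careful since $A/\langle\sigma\rangle$ is only the quotient, but the composition $A \to Y$ is well-defined as a rational map, hence a morphism since $A$ is smooth and $Y$ projective after resolving, or rather one argues with the universal property of Albanese for the smooth locus), so $\mathrm{Alb}(Y)$ is an abelian variety isogenous to a quotient of $A$. Since $A$ is simple of dimension $2$, either $\mathrm{Alb}(Y) = 0$ or $\mathrm{Alb}(Y)$ is isogenous to $A$ itself, the latter forcing $Y$ birational to $A$ and $\sigma$ a translation — contradiction. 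A cleaner route: one directly computes $\dim \mathrm{H}^0(A,\Omega^1_A)^{\langle\sigma\rangle}$ using that $\sigma$ fixes a point (Lemma \ref{finite}), translates $\sigma$ to a homomorphism, and observes that the cotangent action has eigenvalues among $\omega,\omega^2$ only — because if $\sigma$ acted trivially on some cotangent direction, arguing as in the proof of Proposition \ref{non-isogenous} (local coordinates $x,y$ with $\sigma x = x$), one would produce a nonzero invariant $1$-form descending to $Y$ and again $Y$ is not a K3 — but actually for this lemma we don't yet need $Y$ to be K3, we just need that $A$ simple forces the eigenvalue $1$ not to appear. The most robust argument is the Albanese one: no eigenvalue $1$ on $\mathrm{H}^1$, since otherwise $A$ surjects onto a positive-dimensional abelian variety that is a proper quotient, impossible for simple $A$ unless the map is an isogeny, which would make $\sigma$ a translation.

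Next, with the eigenvalue $1$ excluded, $\mathrm{H}^1(A,\mathbf{Q}_\ell)$ decomposes over $\bar{\mathbf{Q}}_\ell$ as $a$ copies of $\omega$ and $b$ copies of $\omega^2$ with $a+b=4$. Since $\mathrm{H}^1(A,\mathbf{Q}_\ell)$ is defined over $\mathbf{Q}_\ell$ and $\omega,\omega^2$ are Galois-conjugate over $\mathbf{Q}_\ell$ (as $p \neq 3$ and, if $p \equiv -1 \bmod 3$, $\omega \notin \mathbf{Q}_\ell$; if $p \equiv 1 \bmod 3$ one instead uses that the characteristic polynomial of $\sigma$ on $\mathrm{H}^1$ has $\mathbf{Q}$-coefficients — indeed $\mathbf{Z}$-coefficients — because $\sigma$ is an endomorphism of $A$ acting on $\mathrm{H}^1$ integrally), the characteristic polynomial of $\sigma$ lies in $\mathbf{Z}[t]$ and is a product of cyclotomic factors; being a degree-$4$ polynomial with the only allowed roots $\omega,\omega^2$, it must be $(t^2+t+1)^2 = \Phi_3(t)^2$, giving $a = b = 2$. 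Equivalently: $\sigma$ acts on $\mathrm{H}^1(A,\mathbf{Q}_\ell)$ compatibly with complex conjugation coming from the polarization / Poincaré duality pairing $\mathrm{H}^1 \times \mathrm{H}^1 \to \mathrm{H}^2 \cong \mathbf{Q}_\ell(-1)$, under which $\sigma$ acts as an isometry, forcing the eigenspace for $\omega$ and that for $\omega^2 = \bar\omega$ to be dual, hence equidimensional: $a = b = 2$.

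The main obstacle I anticipate is the first step — cleanly excluding the eigenvalue $1$ without circular reference to $A/\langle\sigma\rangle$ being K3 (which is the theorem we are ultimately heading toward). The safe way is to phrase it purely in terms of $A$ being simple: a nonzero $\sigma$-invariant class in $\mathrm{H}^1$ would, via Harder--Narasimhan plus Lemma \ref{injectiveness}, give $b_1(Y) > 0$, hence (after normalizing a fixed point to $0$, so $\sigma \in \mathrm{End}(A)$) a nonzero $\sigma$-invariant holomorphic $1$-form, i.e.\ a $\sigma$-invariant cotangent direction at $0$; exponentiating, the connected component of the fixed locus through $0$ would be positive-dimensional, contradicting Lemma \ref{finite}. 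The rest is bookkeeping with cyclotomic polynomials and Poincaré duality, which is routine.
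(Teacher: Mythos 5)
Your main line of argument is correct and essentially reproduces the paper's proof: Harder--Narasimhan together with Lemma \ref{injectiveness} identifies $\dim \mathrm{H}^1(A,\mathbf{Q}_\ell)^{\langle\sigma\rangle}$ with $b_1(Y)$, simplicity of $A$ (no elliptic-curve quotient, and $\sigma$ not a translation) forces this invariant part to vanish, and integrality of the characteristic polynomial gives equal multiplicities for $\omega$ and $\omega^2$ --- the paper obtains the exclusion of the eigenvalue $1$ even more directly from the observation that $\sigma-\mathrm{id}_A$ is an isogeny, whence $\sigma^2+\sigma+\mathrm{id}_A=0$ in $\mathrm{End}^0(A)$. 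One caution: the variant you label ``the safe way'' (passing from a $\sigma$-invariant class in \'etale $\mathrm{H}^1$ to a $\sigma$-invariant holomorphic $1$-form and then ``exponentiating'') is in fact the least safe route in characteristic $p$, since it conflates $\ell$-adic with coherent cohomology; your Albanese/isogeny argument is the one to keep.
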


\begin{proof}
Since $\sigma^{3} = id_{A}$ and $\sigma - id_{A}$ is an isogeny, 
we see $\sigma^{2} + \sigma + id_{A} = 0$.
Therefore, the minimal polynomial of $\sigma$ is $x^{2} + x + 1$ 
(cf. Mumford \cite{M}, Section 19, Theorem 4). Therefore, the possibilities of 
the eigenvalues of $\sigma$ on ${\rm H}^{1}(A, {\bf Q}_{\ell})$ are the following.

Case (i) 1, 1, 1, 1.

Case (ii) 1, 1, $\omega$, $\omega^{2}$.

Case (iii) $\omega$, $\omega$, $\omega^{2}$, $\omega^{2}$.

Suppose Case (i). Then, 
since ${\rm H}^{i}(A, {\bf Q}_{\ell}) \cong \wedge^{i}{\rm H}^{1}(A, {\bf Q}_{\ell})$,
we see that all the eigenvalues of $\sigma$ on ${\rm H}^{*}(A, {\bf Q}_{\ell})$
are 1. Hence, the alternating sum of traces of $\sigma$ on ${\rm H}^{*}(A, {\bf Q}_{\ell})$
is equal to 0. Hence, by the Lefschetz trace formula, $\sigma$ is fixed-point-free on $A$,
which contradicts Lemma \ref{finite}.
Therefore, Case (i) is excluded.

Now, we denote by $Y \longrightarrow A/\langle \sigma \rangle$ 
a resolution of singularities of $A/\langle \sigma \rangle$.
Then, by Lemma \ref{injectiveness}, we have an isomorphism
${\rm H}^{1}(A/\langle \sigma \rangle, {\bf Q}_{\ell}) \cong {\rm H}^{1}(Y, {\bf Q}_{\ell})$. 
and we have 
$\dim {\rm H}^{1}(Y, {\bf Q}_{\ell}) = \dim {\rm H}^{1}(A, {\bf Q}_{\ell})^{\langle \sigma \rangle}$.

Suppose Case (ii). Then we have 
${\rm H}^{1}(Y, {\bf Q}_{\ell}) = 
\dim {\rm H}^{1}(A, {\bf Q}_{\ell})^{\langle \sigma \rangle} = 2$.
Therefore, the dimension $q(Y)$ of the Albanese variety of $Y$ is equal to 1.
Therefore, we have a surjective homomorphism from $A$ to the Albanese variety 
(an elliptic curve), which contradicts the assumption that $A$ is simple.

Hence, we conclude that Case (iii) holds.
\end{proof}

\begin{corollary}
Let $A$ be a simple abelian surface with an automorphism
$\sigma$ of order 3. Assume that $p \neq 3$ and that $\sigma$ is not a translation.
Then, the number of fixed points of $\sigma$ is equal to 9.
\end{corollary}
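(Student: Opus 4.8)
The plan is to compute the number of fixed points of $\sigma$ via the Lefschetz trace formula, using the eigenvalue information from Lemma \ref{eigenvalue}. Since $p\neq 3$ is prime to the order of $\sigma$, the Lefschetz fixed point formula applies and gives
\[
\#\{\text{fixed points of }\sigma\} = \sum_{i=0}^{4} (-1)^i \trace\left(\sigma^* \mid {\rm H}^i(A,{\bf Q}_\ell)\right),
\]
where the left-hand side counts fixed points with multiplicity; by Lemma \ref{finite} the fixed locus is a finite set, and one checks (from the local linearization, using that $\sigma$ has no eigenvalue $1$ on the tangent space at a fixed point — which follows from Lemma \ref{eigenvalue}) that each fixed point is nondegenerate, so it contributes $+1$. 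Hence the right-hand side literally equals the number of fixed points.

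The key computation is then purely multilinear algebra. By Lemma \ref{eigenvalue}, the eigenvalues of $\sigma^*$ on ${\rm H}^1(A,{\bf Q}_\ell)$ are $\omega,\omega,\omega^2,\omega^2$. Using ${\rm H}^i(A,{\bf Q}_\ell)\cong \wedge^i {\rm H}^1(A,{\bf Q}_\ell)$, I would tabulate the eigenvalues on each cohomology group: on ${\rm H}^0$ the eigenvalue is $1$ (trace $1$); on ${\rm H}^1$ the trace is $2\omega+2\omega^2 = -2$; on ${\rm H}^2$ (the six two-fold products $\omega\cdot\omega=\omega^2$ twice, $\omega^2\cdot\omega^2=\omega$ twice, $\omega\cdot\omega^2=1$ twice) the trace is $2\omega^2+2\omega+2 = 0$; by Poincar\'e duality ${\rm H}^3$ has the same trace as ${\rm H}^1$, namely $-2$; and ${\rm H}^4$ has eigenvalue $1$ (trace $1$). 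Summing with signs gives $1 - (-2) + 0 - (-2) + 1 = 6$. This does not match $9$, so I must account for the fact that the Lefschetz number also receives contributions from $\sigma^2$-fixed points, or rather — more to the point — I should instead be careful: the issue is that we want the holomorphic Lefschetz count at these isolated fixed points is indeed $+1$ each, so the correct reading is that the topological Lefschetz number of $\sigma$ equals the number of fixed points, and we obtained $6$. Let me reconcile: in fact the standard result (e.g.\ from Barth's analysis and the Example computations) is that an order-$3$ automorphism of an abelian surface acting by $(\omega,\omega)$-type at fixed points can have, depending on the local weights, either $+1$ or a root-of-unity contribution; the honest approach is to observe that since $\sigma$ acts on the cotangent space at each fixed point $P$ with eigenvalues among $\{\omega,\omega^2\}$ (eigenvalue $1$ being excluded as $\sigma-\mathrm{id}$ is an isogeny near $P$), the local contribution to the Lefschetz number of $\sigma$ is $1/\det(1-\sigma^*|_{T_P^\vee})$, which for weights $(\omega,\omega)$ is $1/(1-\omega)^2$, for $(\omega^2,\omega^2)$ is $1/(1-\omega^2)^2$, and for $(\omega,\omega^2)$ is $1/|1-\omega|^2 = 1/3$. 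So one cannot directly read off the count; instead one argues as follows.

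The cleaner route, which I would actually carry out, is: there are exactly $9$ fixed points because the multiplicities work out integrally. Concretely, after translating a fixed point to the origin, $\sigma$ is a group automorphism, so the fixed locus is the kernel of the endomorphism $\sigma - \mathrm{id}_A$, which is a finite subgroup scheme of $A$; since $p\neq 3$ and $\sigma$ satisfies $\sigma^2+\sigma+\mathrm{id}=0$, we have $(\sigma-\mathrm{id})$ dividing (up to the unit $\sigma$, which is invertible) an endomorphism whose square relation forces $\deg(\sigma-\mathrm{id}_A) = (\sigma-\mathrm{id})(\sigma^2-\mathrm{id}) = \cdots$; working in ${\rm End}^0(A)$, the reduced norm / degree of $1-\sigma$ is computed from the eigenvalues $1-\omega,1-\omega,1-\omega^2,1-\omega^2$ on ${\rm H}^1$, giving $\deg(\sigma-\mathrm{id}_A) = |(1-\omega)(1-\omega^2)|^2 = 3^2 = 9$. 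Since $A$ is ordinary (hence, locally at the fixed points, $\sigma-\mathrm{id}$ is \'etale as $p\neq 3$ and the relevant eigenvalues $1-\omega,1-\omega^2$ are units in ${\bf Z}_p$ when $p\neq 3$), the kernel is reduced of order $9$, i.e.\ $\sigma$ has exactly $9$ fixed points. The main obstacle is making the separability/\'etaleness of $\sigma-\mathrm{id}_A$ at the fixed points rigorous — i.e.\ ruling out that the kernel subgroup scheme is non-reduced; this is where ordinarity (established in the first Proposition of Section \ref{s:simple}) together with $p\neq 3$ is used, since the "bad" primes for $1-\omega$ being a unit are exactly those dividing $3$. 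Once that point is secured, the degree computation $\deg(1-\sigma)=9$ in the endomorphism algebra, via the eigenvalues on ${\rm H}^1$ from Lemma \ref{eigenvalue}, immediately yields the claim.
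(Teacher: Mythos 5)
Your final argument --- computing $\deg(\sigma-\mathrm{id}_A)$ as the product of $(1-\alpha_i)$ over the eigenvalues $\alpha_i=\omega,\omega,\omega^2,\omega^2$ of $\sigma$ on ${\rm H}^{1}(A,{\bf Q}_{\ell})$, obtaining $\bigl((1-\omega)(1-\omega^2)\bigr)^2=9$, and concluding that $\ker(\sigma-\mathrm{id}_A)$ is reduced of order $9$ --- is correct and is a genuinely different route from the paper, which simply applies the Lefschetz trace formula. Two remarks on your version. First, you do not need ordinarity to get reducedness of the kernel: an isogeny whose degree is prime to $p$ is automatically separable, and $9$ is prime to $p$ precisely because $p\neq 3$, so the kernel is \'etale with exactly $9$ geometric points; this makes your argument self-contained and arguably cleaner than the paper's. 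Second, your entire detour was caused by an arithmetic slip in the Lefschetz computation: the six eigenvalues of $\sigma$ on ${\rm H}^{2}\cong\wedge^{2}{\rm H}^{1}$ are $1,1,1,1,\omega,\omega^2$, because the product $\omega\cdot\omega^2$ occurs \emph{four} times (two choices of $\omega$-eigenvector times two choices of $\omega^2$-eigenvector), while $\omega\cdot\omega=\omega^2$ and $\omega^2\cdot\omega^2=\omega$ each occur exactly once. Hence the trace on ${\rm H}^{2}$ is $3$, not $0$, and the Lefschetz number is $1-(-2)+3-(-2)+1=9$, which is precisely the paper's proof. Your subsequent discussion of local weights $1/\det(1-\sigma^*|_{T_P^\vee})$ conflates the holomorphic Lefschetz formula with the topological ($\ell$-adic) one; in the latter, each point of the \'etale fixed scheme contributes $+1$, so no such weights enter. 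In short, the degree argument you settled on is valid, but the Lefschetz route you abandoned was never actually broken.
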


\begin{proof}
Since $A$ is simple, the fixed loci of $\sigma$ are isolated. 
Since ${\rm H}^{2}(A, {\bf Q}_{\ell}) \cong \wedge^{2}{\rm H}^{1}(A, {\bf Q}_{\ell})$,
the eigenvalues of $\sigma$ on ${\rm H}^{2}(A, {\bf Q}_{\ell})$ are given by
\begin{eqnarray}
\label{eq:ev}
  1, 1, 1, 1, \omega,  \omega^{2},
\end{eqnarray}
and on ${\rm H}^{1}(A, {\bf Q}_{\ell})$ they are the same as on ${\rm H}^{3}(A, {\bf Q}_{\ell})$.
By the Lefschetz trace formula, we see that the number of fixed points
is equal to 9.
\end{proof}

We are now ready to prove Theorem \ref{thm:simple}.
Let $Y \longrightarrow A/\langle \sigma \rangle$ be a resolution of singularities 
of $A/\langle \sigma \rangle$.
Since we have a separable dominating rational map from $A$ to $Y$, we see 
$$0 =\kappa (A) \geq \kappa (Y).
$$
 By the Enriques--Kodaira classification
 (extended to positive characteristic by Bombieri--Mumford), $Y$ is a K3 surface,  an Abelian surface, 
a hyperelliptic surface, an Enriques surface or a ruled surface.
If $Y$ is an Abelian surface, the rational map from $A$ to $Y$ is
a homomorphism. Therefore, $\sigma$ must coincide with a translation, which contradicts
our assumption. If $Y$ is a ruled surface with $q(Y) \geq 2$. Then, we have a rational
map from $A$ to $Y$. Therefore, we have a rational map from $A$ to the base curve of $Y$,
which is a curve of genus $\geq 2$. A contradiction to Lemma \ref{ruled}.
If $Y$ is either hyperelliptic or ruled with $q(Y) = 1$, then we have a homomorphism
from $A$ to an elliptic curve --  which contradicts that $A$ is simple.
If $Y$ is either rational or Enriques, then we have an inclusion
$$
       {\rm H}^{2}(A, {\bf Q}_{\ell})^{\langle \sigma \rangle}\cong 
       {\rm H}^{i}(A/\langle \sigma \rangle, {\bf Q}_{\ell})\hookrightarrow {\rm H}^{2}(Y, {\bf Q}_{\ell}).
$$
Since $Y$ is supersingular in the sense of Shioda, that is, ${\rm H}^{2}(Y, {\bf Q}_{\ell})$
is generated by algebraic cycles, we see that 
${\rm H}^{2}(A, {\bf Q}_{\ell})^{\langle \sigma \rangle}$ is generated by algebraic cycles.
Since $\dim {\rm H}^{2}(A, {\bf Q}_{\ell})^{\langle \sigma \rangle} = 4$ by \eqref{eq:ev}, we see the Picard number 
$\rho (A) \geq 4$, which contradicts Corollary \ref{Picard}. Hence, $A$ is a K3 surface.
This completes the proof of Theorem \ref{thm:simple}.
\qed

\medskip

Summarizing these results, we have the following corollary.

\begin{corollary}
Let $A$ be a simple ordinary abelian surface with an automorphism
$\sigma$ of order 3. Assume $p \neq 3$ and $\sigma$ is not a translation.
Then, $A/ \langle \sigma \rangle$ has just 9 $A_{2}$-rational double points
as singular points, and the minimal resolution is a K3 surface with $\rho=19$.
\end{corollary}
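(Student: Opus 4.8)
The plan is to identify the singular points and the Picard number directly from results already in hand. Recall from the corollary following Lemma~\ref{eigenvalue} that $\sigma$ has exactly $9$ fixed points and that they are isolated. Since $3$ is prime, every point of $A$ outside the fixed locus has trivial stabilizer, so $A/\langle\sigma\rangle$ is smooth there; hence its singular locus is exactly the set of $9$ images of the fixed points. To name these singularities, fix a fixed point $P$, translate it to the origin so that $\sigma$ becomes a homomorphism, and consider the induced action of $d\sigma$ on the two-dimensional tangent space $T_{0}A$. As in the proof of Lemma~\ref{eigenvalue}, $\sigma^{2}+\sigma+1=0$ in ${\rm End}(A)$, hence $d\sigma$ satisfies $x^{2}+x+1=0$; since $p\neq 3$ this polynomial has two distinct roots, both primitive cube roots of unity, so $d\sigma$ is diagonalizable with eigenvalues in $\{\omega,\omega^{2}\}$. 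Therefore the singularity of $A/\langle\sigma\rangle$ above $P$ is a cyclic quotient singularity of type $\tfrac{1}{3}(1,1)$, $\tfrac{1}{3}(1,2)$, or $\tfrac{1}{3}(2,2)$.

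Next I would invoke Theorem~\ref{thm:simple}: the minimal resolution $\varphi\colon Y\to A/\langle\sigma\rangle$ is a K3 surface, so $K_{Y}\sim 0$ and, by adjunction, every irreducible curve $C\subset Y$ satisfies $C^{2}=2p_{a}(C)-2\geq -2$; in particular $Y$ contains no $(-3)$-curve. The minimal resolution of $\tfrac{1}{3}(1,1)$, and likewise of $\tfrac{1}{3}(2,2)$, has a single exceptional curve, which is a smooth rational curve of self-intersection $-3$; hence these two types are impossible, and every one of the $9$ singular points of $A/\langle\sigma\rangle$ is of type $\tfrac{1}{3}(1,2)$, i.e.\ an $A_{2}$ rational double point. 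Because the $9$ fixed points are pairwise distinct, their exceptional fibres in $Y$ are disjoint, so $Y$ is a K3 surface carrying $9$ disjoint $A_{2}$ configurations of smooth rational curves, i.e.\ a K3 surface with $9$ cusps.

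It remains to show $\rho(Y)=19$. Let $\pi\colon A\to A/\langle\sigma\rangle$ be the quotient map. From $\pi_{*}\pi^{*}=3$ and $\pi^{*}\pi_{*}=\sum_{g\in\langle\sigma\rangle}g^{*}$ one sees that $\pi^{*}$ identifies $\NS(A/\langle\sigma\rangle)_{\bbQ}$ with $\NS(A)^{\langle\sigma\rangle}_{\bbQ}$; pulling these classes back along $\varphi$, and using that the $18$ exceptional curves $E_{i},E_{i}'$ are algebraic and orthogonal to all $\varphi^{*}$-classes, one obtains an orthogonal decomposition $\NS(Y)_{\bbQ}=\langle E_{i},E_{i}'\rangle_{\bbQ}\oplus\varphi^{*}\NS(A/\langle\sigma\rangle)_{\bbQ}$, whence $\rho(Y)=18+\dim_{\bbQ}\NS(A)^{\langle\sigma\rangle}$. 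The subspace $\NS(A)^{\langle\sigma\rangle}_{\bbQ}$ contains the $\langle\sigma\rangle$-average of an ample class, so its dimension lies between $1$ and $\rho(A)\leq 3$ (Corollary~\ref{Picard}); thus $19\leq\rho(Y)\leq 21$. A K3 surface never has Picard number $21$, which rules out that value. And $\rho(Y)=20$ would force $\dim_{\bbQ}\NS(A)^{\langle\sigma\rangle}=2$: this is impossible when $\rho(A)=3$, since a $\bbZ$-linear semisimple operator of order dividing $3$ on a rank-$3$ lattice cannot have an exactly two-dimensional fixed space (its characteristic polynomial would be $(x-1)^{2}(x-\zeta)$ with $\zeta$ a primitive cube root of unity, which is not in $\bbZ[x]$), and impossible when $\rho(A)=2$, since by Proposition~\ref{structure} ${\rm End}^{0}(A)$ contains the imaginary quadratic field $\bbQ(\omega)$, whereas the only simple abelian surfaces with $\rho(A)=2$ not ruled out by the lemma excluding the cases (III-ii), (IV-ii), (IV-iii) have a totally real quadratic endomorphism algebra. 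Hence $\rho(Y)=19$.

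I expect the crux to be the second step: a priori the local eigenvalue type of $d\sigma$ might vary among the nine fixed points, and one has to be certain both "bad" types are excluded. The clean way around this is precisely that $K_{Y}\sim 0$ forbids $(-3)$-curves and so kills both bad types at once, with no need to track $\det(d\sigma)$ or the global $2$-form; the only other point requiring care is to make the N\'eron--Severi computation in the last step a genuine equality, which is why one invokes that the exceptional classes are algebraic together with the endomorphism classification constraining $\rho(A)$ to $\{1,3\}$.
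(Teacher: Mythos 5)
Your proof is correct. The paper itself offers no argument here --- the corollary is introduced with ``Summarizing these results'' and left as an assemblage of Theorem \ref{thm:simple}, the $9$-fixed-point count, and Corollary \ref{Picard} --- so your write-up is essentially a careful fleshing-out of what the authors leave implicit. The two places where genuine argument is actually needed are exactly the ones you flag, and you handle both correctly. First, the local type at a fixed point: the relation $\sigma^2+\sigma+1=0$ only forces the eigenvalues of $d\sigma$ to be primitive cube roots of unity, and Lemma \ref{eigenvalue} (which lives on $\mathrm{H}^1_{\text{\'et}}$) does not by itself distribute them as $(\omega,\omega^2)$ on the tangent space; your exclusion of $\tfrac13(1,1)\cong\tfrac13(2,2)$ via the non-existence of $(-3)$-curves on a K3 surface is a clean, characteristic-free way to settle this without tracking $\det(d\sigma)$ or the $2$-form. (An equally quick alternative: $e(Y)=24$ forces $\sum_i(\#\text{exceptional curves at }P_i)=18$ over $9$ points, so each point contributes $2$ curves, i.e.\ is $A_2$.) Second, for $\rho(Y)=18+\dim_{\bbQ}\NS(A)^{\langle\sigma\rangle}_{\bbQ}=19$: your reduction via the resolution/quotient decomposition is standard and correct, and the case analysis (no degree-$2$ fixed subspace for a semisimple order-$3$ operator on a rank-$3$ lattice; $\rho(A)=2$ incompatible with $\bbQ(\omega)\subset\mathrm{End}^0(A)$ by Proposition \ref{structure} and the table; $\rho(Y)=21$ impossible by Artin) closes every gap. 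No errors found.
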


\section{Explicit quotients of simple abelian surfaces}

Exhibiting explicit simple abelian surfaces turns out to be a non-trivial problem
in positive characteristic -- especially in characteristic two.
For this reason, we decided to include families of K3 surfaces with nine cusps
in any characteristic $p\neq 3$ 
such that the covering abelian surfaces are generically simple.

To explain the approach, we recall from \cite{Barth2}
that  complex tori $A$ with an automorphism $\sigma$ of order $3$
come in a two-dimensional analytic family such that generically
\[
\NS(A)=A_2, \;\;\; T_A = M_0 = U+A_2(-1).
\]
Algebraic subfamilies are obtained by enhancing the N\'eron--Severi lattice by a positive vector $H$ from $M_0$;
the generic N\'eron--Severi lattice is thus promoted 
to the primitive closure $N$ of $\bbZ H+A_2$ inside ${\rm H}^2(A,\bbZ)\cong U^3$.
The very general member of the resulting one-dimensional family
is simple if and only if $N$ does not represent zero non-trivially.
In \cite{Barth2}, an abstract example with $H^2=12$ is worked out;
in contrast we will work out an explicit example with $H^2=10$,
though admittedly, it is fully  explicit only on the K3 side 
(which can be used to recover $A$ as explained in Section \ref{s:pf1}).
To this end, take $H\in U\subset M_0$ with $H^2=10$ and postulate that $H\in\NS(A)$.
Then this determines a one-dimensional family of abelian surfaces $A$
with an automorphism $\sigma$ of order $3$ 
such that generically 
$$
\NS(A)=\bbZ H+A_2 \;\;\; \text{ and } \;\;\; T_A=\bbZ(-10)+A_2(-1).
$$

Consider the family of K3 surfaces $X$ which arise as minimal resolutions of the quotients $A/\langle\sigma\rangle$.
Then these always have $L\subset\NS(X)$,
and following \cite{Barth2}, the sublattice $M_0$ pushes down to $M=U(3)+A_2(-1)$
(the lattice from Lemma \ref{lem:L}).
The algebraic enhancement means that $H$ induces a positive vector $v$ of square $v^2=30$ in $\NS(X)$,
such that generically 
\begin{eqnarray}
\label{eq:T}
\;\;\; \;\;T_X = (v^\perp\subset M_0) =  \bbZ(-30)+A_2(-1), \;\;\; \NS(X) \supset \bbZ v+ L
\end{eqnarray}
where the last inclusion has index $3$ for discriminant reasons.

\begin{lemma}
Generically, one has 
\begin{eqnarray}
\label{eq:NS}
\NS(X) = U + 2 E_6 + A_4 + A_1.
\end{eqnarray}
\end{lemma}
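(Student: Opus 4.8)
The plan is to identify $\NS(X)$ as the unique even lattice with the invariants forced by \eqref{eq:T}: it has rank $20$ (since $\rho(X)=22-\rk T_X=20$), signature $(1,19)$, and discriminant group/form dual to that of $T_X=\bbZ(-30)+A_2(-1)$. So I would first record that $\NS(X)$ and $T_X$ are orthogonal complements inside $\Lambda_{\rm K3}=U^3+E_8^2$, hence $q_{\NS(X)}=-q_{T_X}$; since $\disc T_X = 30\cdot 3 = 90 = 2\cdot 3^2\cdot 5$, we get $|\disc\NS(X)|=90$ as well, with $A_{\NS(X)}\cong \bbZ/2\times\bbZ/5\times (\bbZ/3)^2$ and a specific discriminant form. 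The candidate lattice $U+2E_6+A_4+A_1$ has exactly rank $2+12+4+1=19+1=20$... wait, $2\cdot 6=12$, $+4+1+2=19$, so rank $19$; one must double-check against $\rho=20$ — more likely the intended reading is that this root lattice already has the right rank $20$ once $U$ contributes $2$: $2+12+4+1=19$, so in fact I expect the correct count uses $\rk E_6=6$ giving $2+12+4+1=19$, and the missing rank is absorbed because $\rho(X)=19$ here (the very general member of a one-parameter family has $\rho=20$, but if $T_X$ has rank $3$ then $\rho=19$) — indeed $\rk(\bbZ(-30)+A_2(-1))=3$, so $\rho(X)=19$ and $\rk\NS(X)=19$, matching $U+2E_6+A_4+A_1$ exactly. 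Good; I would state this rank/signature bookkeeping cleanly at the outset.

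Next I would compute the discriminant form of $M:=U+2E_6+A_4+A_1$ and check it equals $-q_{T_X}$. Here $q_U=0$, $\disc E_6 = 3$ with $A_{E_6}\cong\bbZ/3$, $\disc A_4=5$ with $A_{A_4}\cong\bbZ/5$, $\disc A_1=2$ with $A_{A_1}\cong\bbZ/2$; so $A_M\cong\bbZ/2\times\bbZ/5\times(\bbZ/3)^2$ as required, and one compares the $2$-, $5$-, and $3$-parts of the forms separately with those coming from $\bbZ(-30)+A_2(-1)$ (noting $q_{A_2(-1)}=-q_{A_2}$ and that the $\bbZ/2$ and $\bbZ/5$ parts come from the $\bbZ(-30)$ summand via CRT). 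Since the rank exceeds the length of the discriminant group by enough margin, Nikulin's uniqueness theorem \cite{Nikulin} then shows $M$ is the unique even lattice of this signature and discriminant form, hence $\NS(X)\cong M$.

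The remaining, and genuinely substantive, point is to verify that $\NS(X)$ really does have this discriminant form — equivalently, that the index-$3$ overlattice $\NS(X)\supset\bbZ v+L$ in \eqref{eq:T} is the right one and not a different gluing. I would handle this by working on the transcendental side: $T_X=\bbZ(-30)+A_2(-1)$ is explicit and primitively embedded in $\Lambda_{\rm K3}$, and its orthogonal complement is computed once and for all — this is exactly the kind of computation done in \cite{Barth2} for the push-down of $M_0$, so I would follow loc.\ cit., pushing $v$ and $L$ down and checking that the glue vector pairing $v$ with the length-$9$ vector of $L$ generates the $\bbZ/3$-overlattice corresponding to $U(3)\to U$ inside $M$. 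The main obstacle I anticipate is precisely this matching of the $3$-adic discriminant forms: one must make sure the order-$3$ glue between $\bbZ v$ and $L$ is isotropic for $q_{\bbZ v}\oplus q_L$ and yields $q$-value matching $E_6+E_6$ rather than, say, $A_2+A_2$ (these are distinct forms on $(\bbZ/3)^2$), and this requires pinning down the square classes of $v$ and of the length-$9$ vector modulo the gluing — a short but delicate discriminant-form computation rather than anything conceptually hard.
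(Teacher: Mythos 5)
Your proposal is correct and follows essentially the same route as the paper: the paper's proof is precisely the observation that, by \cite{Nikulin}, it suffices to check $q_{\NS}=-q_{T}$ for $T_X$ as in \eqref{eq:T}, which is the rank/signature/discriminant-form bookkeeping you carry out (after your self-correction to $\rho=19$). The only superfluous part is your final paragraph: the identification $T_X=\bbZ(-30)+A_2(-1)$ and the index-$3$ gluing are part of the setup \eqref{eq:T} assumed before the lemma, so the lemma itself needs no further analysis of which overlattice of $\bbZ v+L$ occurs.
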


\begin{proof}
By \cite{Nikulin} suffices to verify that the discriminant forms of N\'eron--Severi lattice and transcendental lattice 
generically agree up to sign;
i.e.\ for $T_X$ from \eqref{eq:T} 
and $\NS(X)$ as in \eqref{eq:NS},
we have $q_\NS = - q_T$
which is readily verified.
\end{proof}

The above representation of $\NS(X)$ is very convenient because it implies by standard arguments 
(see \cite{SSh}, for instance)
that $X$ admits an elliptic fibration such that generically there is only a single section
(so most of $\NS$ is captured by  fibre components).
One can use this as a starting point to work out the following family of elliptic K3 surfaces
with 9 cusps, given by in affine Weierstrass form with parameter $\lambda$:
\begin{eqnarray*}
y^2 + (\lambda+1)txy & = & x^3+t(3t^2-t(\lambda^2-4\lambda+1)+3\lambda^2)x^2\\
&&+3t^2(t-1)^2(t+\lambda^3)(t+\lambda)x+t^3(t-1)^4(t+\lambda^3)^2
\end{eqnarray*}

\begin{proposition}
\label{prop:19}
In any characteristic $\neq 3,5$,
the  family $\mathcal X$ has generically $\rho(\mathcal X)=19$
and $\NS(\mathcal X)=U + 2 E_6 + A_4 + A_1$.
\end{proposition}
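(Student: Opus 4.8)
The plan is to read off $\NS(\mathcal X)$ from the elliptic fibration $\pi\colon\mathcal X\to\bbP^1_t$ by means of the Shioda--Tate formula $\rho(\mathcal X)=2+\sum_v(m_v-1)+\rank\,\mathrm{MW}$, where $m_v$ is the number of components of the fibre over $v$. So the first task is to locate the reducible fibres: compute the discriminant $\Delta\in k[\lambda][t]$ and the $j$-invariant of the given Weierstrass model and run Tate's algorithm at each bad place. To match the target lattice $U+2E_6+A_4+A_1$ of rank $19$, I expect, for generic $\lambda$, two fibres of type $IV^*$ (each contributing a root lattice $E_6$), one fibre of type $I_5$ (contributing $A_4$), one fibre of type $I_2$ (contributing $A_1$), and one nodal fibre $I_1$; the Euler-number bookkeeping $2\cdot 8+5+2+1=24$ is consistent with a K3 surface. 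Concretely I expect the two $IV^*$'s at $t=0$ and $t=\infty$, where the coefficient profile is $(1,1,\infty,2,3)$ and forces additive reduction of potentially good type, the $I_5$ at $t=1$, the $I_2$ at $t=-\lambda^3$, and the $I_1$ at the remaining simple zero of $\Delta$.

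Carrying this out is the computational heart and is a little delicate: for instance at $t=1$ the naive valuation of $\Delta$ is too large, and one only obtains type $I_5$ after a leading-term cancellation in $b_8$, so the Kodaira types must really be extracted by Tate's algorithm rather than guessed from divisibilities. This is also where the hypothesis on $p$ enters: in characteristic $5$ bad fibres collide for generic $\lambda$ (which is why the text provides a separate family for that characteristic), and in characteristic $2$ the additive fibres are wild so Tate's algorithm needs extra care; but the Kodaira types, and hence the trivial lattice $T_{\mathrm{triv}}$ — spanned by the zero section, a general fibre, and the non-identity fibre components — which is isometric to $U+2E_6+A_4+A_1$ of rank $19$, come out the same. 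In particular $\rho(\mathcal X)\ge 19$ unconditionally, and since $\NS(\mathcal X)/T_{\mathrm{triv}}\cong\mathrm{MW}$ by Shioda--Tate, the Proposition reduces to showing that $\mathrm{MW}$ has rank $0$ for generic $\lambda$, i.e.\ $\rho(\mathcal X)\le 19$.

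This upper bound is the genuine obstacle. In characteristic $0$ it is built into the construction: $\mathcal X$ was obtained by enhancing Barth's two-parameter family of tori carrying an order-$3$ automorphism \cite{Barth2} by the single algebraic class $v$ with $v^2=30$, so by \eqref{eq:T} the generic transcendental lattice is $T_X=\bbZ(-30)+A_2(-1)$, of rank $3$, which forces $\rho=19$. In characteristic $p$ the generic member could a priori acquire an extra algebraic class, and ruling this out is where work is needed. One route is to transport the non-triviality of the geometric monodromy of the rank-$3$ ``transcendental'' summand of $R^2\pi_*\bbQ_\ell$ on the $\lambda$-line from characteristic $0$, where it follows from the period computation, to characteristic $p$ under good reduction, and then to invoke the Tate conjecture for K3 surfaces over finitely generated fields of positive characteristic, obtaining $\rho(\mathcal X_{\overline{k(\lambda)}})=22-3=19$. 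A more hands-on alternative is to exhibit one value $\lambda_0$ over a finite field with $\rho(\mathcal X_{\lambda_0})=19$ — for instance by arranging that the covering abelian surface $A_{\lambda_0}$ is simple and ordinary (its rational transcendental lattice being $(\bbZ(-10)+A_2(-1))\otimes\bbQ$, of discriminant class $-30$, which is incompatible with $A_{\lambda_0}$ being isogenous to a product of elliptic curves) and appealing to Theorem \ref{thm:simple} and its corollary — and then to use that the Picard number can only drop under generization.

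Granting $\rho(\mathcal X)=19$, the identification $\NS(\mathcal X)=U+2E_6+A_4+A_1$ follows by lattice bookkeeping: the inclusion $T_{\mathrm{triv}}\subseteq\NS(\mathcal X)$ of rank-$19$ lattices has finite index $f$ with $f^2\mid\operatorname{disc}T_{\mathrm{triv}}=90$, so $f\in\{1,3\}$, and $f=3$ is excluded because an index-$3$ even overlattice would correspond to a non-zero isotropic vector in the $3$-part $(\bbZ/3)^2$ of the discriminant group of $T_{\mathrm{triv}}$, whereas a direct check shows that the form $q_{E_6}\oplus q_{E_6}$ there takes only the values $0$ (at the origin) and $\pm 2/3$, hence has no isotropic vector besides $0$. (Alternatively: $\mathcal X$ is then not supersingular, so it is of finite height and lifts to characteristic $0$ with all of $\NS(\mathcal X)$, which by the Lemma preceding the Proposition together with \cite{Nikulin} is $U+2E_6+A_4+A_1$.) Hence $f=1$ and $\NS(\mathcal X)=U+2E_6+A_4+A_1$. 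The single real obstacle is the characteristic-$p$ estimate $\rho\le 19$ in the third paragraph; the fibre analysis and the lattice bookkeeping are routine, if fiddly.
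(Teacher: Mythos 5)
Your overall architecture is sound and two of its three pieces agree with the paper: the fibre analysis giving the trivial lattice $U+2E_6+A_4+A_1$ and hence $\rho\ge 19$, and the final exclusion of an index-$3$ overlattice via the absence of non-zero isotropic vectors in the $3$-part $q_{E_6}\oplus q_{E_6}$ of the discriminant group, are exactly what the paper does. The genuine gap is in the one step you yourself flag as the obstacle, the upper bound $\rho\le 19$. Your ``hands-on'' route cannot be carried out as stated: a K3 surface over $\overline{\bbF}_p$ has \emph{even} geometric Picard number (the Frobenius eigenvalues on $H^2$ that are not roots of unity are non-real and pair off under conjugation, so granting the Tate conjecture $22-\rho$ is even), so there is no $\lambda_0$ over a finite field with $\rho(\mathcal X_{\lambda_0})=19$. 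Correspondingly, a simple ordinary abelian surface over $\overline{\bbF}_p$ has quartic CM, hence $\rho(A)=2$ with $\sigma$ acting trivially on $\NS(A)$ (an order-$3$ element of $\GL_2(\bbZ)$ fixing the invariant ample class $H+\sigma^*H+\sigma^{*2}H$ is the identity), and the quotient K3 has $\rho=18+\rho(A)\ge 20$; the corollary you invoke cannot be applied literally over a finite field. The most a finite-field specialization can give is $\rho\le 20$, after which you must still exclude $\rho=20$ generically, and that forces you back to proving non-isotriviality -- a point your proposal never addresses. Your other route (monodromy in characteristic $0$ transported to characteristic $p$, plus the Tate conjecture over function fields) could in principle work, but it presupposes an identification of the explicit Weierstrass family with the abstract lattice-polarized family that is only asserted informally, and the specialization of monodromy is not carried out; as written it is a plan rather than a proof.

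The paper's argument is different, self-contained, and pivots on exactly the non-isotriviality you omit. It first shows $\mathcal X$ is non-isotrivial: the $j$-invariant varies, and a single K3 surface carries only finitely many elliptic fibrations (Sterk, Lieblich--Maulik) for $p\neq 2$; for $p=2$ it exhibits two explicit members with extra sections of heights $29/30$ and $61/30$, giving $\rho=20$ with distinct discriminants $-87$ and $-183$. For a non-isotrivial one-dimensional family the only alternative to $\rho=19$ is then $\rho=22$, since $\rho=20$ surfaces do not move and $\rho=21$ is impossible by Artin. Supersingularity is excluded for $p>2$ by coprimality of $\det N_0=90$ to $p$: the $p$-length $2\sigma$ of $A_{\Lambda_{p,\sigma}}$ must be carried by $N_0^\perp$ of rank $3$, forcing $\sigma=1$, which contradicts non-isotriviality via Ogus's uniqueness of the $\sigma=1$ surface; for $p=2$ the explicit ordinary member settles it. To repair your write-up, either adopt this route or supply the missing monodromy and Tate-conjecture details; as it stands the crucial inequality $\rho\le 19$ is not established.
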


Before coming to the proof of the proposition, we note 
that we can recover the family of covering abelian surfaces
from $X$ 
by the geometric argument from Section \ref{s:pf1}.
In particular, Proposition \ref{prop:19} implies the following:

\begin{corollary}
\label{cor:simple35}
The covering abelian surfaces are generically simple
 in any characteristic $\neq 3, 5$.
\end{corollary}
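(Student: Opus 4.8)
The plan is to deduce Corollary~\ref{cor:simple35} from Proposition~\ref{prop:19} by a purely lattice-theoretic argument: a complex torus $A$ with order-$3$ automorphism fails to be simple precisely when its Néron--Severi lattice represents zero non-trivially, and we have explicit control of $\NS(A)$ (equivalently $T_X$) from \eqref{eq:T}. So first I would recall the correspondence set up above: by the geometric construction of Section~\ref{s:pf1}, the family $\mathcal X$ of elliptic K3 surfaces recovers a one-dimensional family of abelian surfaces $A$ equipped with $\sigma$ of order~$3$, and the transcendental lattice $T_A$ is obtained from $T_X$ by undoing the push-down $M_0\rightsquigarrow M$; concretely $T_A = \bbZ(-10)+A_2(-1)$ (and $\NS(A)$ is the primitive closure of $\bbZ H + A_2$ with $H^2=10$) as long as $\rho(\mathcal X)=19$, i.e.\ generically by Proposition~\ref{prop:19}.

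Next I would invoke the criterion from \cite{Barth2} quoted before the lemma: the very general member of such an algebraic family is simple if and only if the generic Néron--Severi lattice $N$ of $A$ — here $N = $ primitive closure of $\bbZ H + A_2$, of rank $3$, signature $(1,2)$, discriminant $-30$ — does \emph{not} represent zero non-trivially. (The point is that a non-simple $A$ is isogenous to a product of elliptic curves, forcing an extra isotropic class, i.e.\ an elliptic pencil, hence a non-trivial zero of the intersection form on $\NS(A)$; conversely an isotropic class yields a genus-one fibration and a splitting up to isogeny.) So the corollary reduces to the claim that the rank-$3$ even lattice $N$ of discriminant $-30$ with $N^\vee/N\cong\bbF_3\times\bbF_{?}$... — more precisely the lattice with $q_N$ dual to $q_{T_X}$ — is anisotropic. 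One clean way: $N$ has the same genus as (is isometric to) a suitable lattice whose isotropy one can test directly; since $\operatorname{rank} N = 3 < 4$, there is no obstruction from the Hasse--Minkowski package forcing isotropy, and a short congruence argument (reduce the quadratic form mod a well-chosen small prime, e.g.\ read off that $x^2+xy+y^2$ and the third coordinate cannot simultaneously vanish modulo the relevant prime without all coordinates vanishing) rules out a primitive isotropic vector. I would carry this out by writing a Gram matrix for $N$ explicitly from \eqref{eq:T}/\eqref{eq:NS}, then checking anisotropy of the associated ternary quadratic form over $\bbZ$ by a finite mod-$p$ check together with a descent/2-adic argument.

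The main obstacle I anticipate is precisely this last step: verifying that the specific ternary form of discriminant $30$ is anisotropic over $\bbQ$ (equivalently over all $\bbQ_p$ and $\bbR$), and making sure this is the \emph{generic} behaviour — i.e.\ that the one extra Picard class appearing in non-generic members is the only way simplicity can fail, and that it genuinely fails only on a proper (countable, or lower-dimensional) subfamily. Handling $\bbR$ is immediate from the signature $(1,2)$ being indefinite but not enough vectors; the $p$-adic conditions for odd $p\nmid 30$ are automatic, and one only needs to examine $p\in\{2,3,5\}$, of which $p=3,5$ are exactly the excluded characteristics — which is a reassuring sanity check that the lattice genuinely sees why those characteristics must be thrown out. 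Since the relevant lattice computations do not depend on the ground field, the anisotropy, once established, holds in every characteristic $\neq 3,5$, and Proposition~\ref{prop:19} guarantees we stay in the generic stratum there; this yields the corollary.
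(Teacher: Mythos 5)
Your proposal is correct and follows essentially the same (largely implicit) route as the paper: Proposition \ref{prop:19} pins down the generic transcendental lattice, hence $\NS(A)$ as the rank-$3$ lattice $\langle 10\rangle\oplus A_2(-1)$ of discriminant $30$, and simplicity follows from its anisotropy via the criterion quoted from \cite{Barth2} (only the direction ``isotropic $\NS$ is forced by an isogeny splitting'' is needed, which is characteristic-free). The anisotropy check you defer does go through cleanly: $10x^2=2(y^2-yz+z^2)$ reduces to $5x^2=y^2-yz+z^2$, which has no nontrivial solution because the right-hand side is a norm from $\bbZ[\omega]$ and thus has even $5$-adic valuation ($5$ being inert), while the left-hand side has odd $5$-adic valuation.
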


\begin{proof}[Proof of Proposition \ref{prop:19}]
We first prove that the family $\mathcal X$ is non-isotrivial.
To this end, we use that the discriminant $\Delta$ of the above elliptic fibration 
obviously varies with $\lambda$
-- and so does the j-invariant.
Hence, if the family were isotrivial, i.e.\ almost all members isomorphic to a single K3 surface $X_0$,
then $X_0$ would admit infinitely many non-isomorphic elliptic fibrations.
Over fields of characteristic $\neq 2$, this is ruled out by work of Sterk \cite{Sterk} 
and Lieblich--Maulik \cite{LM}.

In characteristic $2$, it suffices by \cite{MM} to exhibit two non-isomorphic smooth specializations within $\mathcal X$.
For this purpose, we endow special members of the family with a suitable section as follows.
We start by arguing in characteristic zero with a root $\alpha$ of  $\alpha^3-2\alpha^2-3\alpha+9$.
Let $L=\mathbb Q(\alpha)$.
Then the special member $X$ of the family $\mathcal X$ at $\lambda=\alpha$ over $L$
admits a section of height $29/30$ with $x$-coordinate
$-t(t-1)^2(t+\alpha^3)/\alpha$.
It follows that $X$ is a singular K3 surface of discriminant $-87$.
Note that $X$ has smooth reduction $X_2$ over $\mathbb F_8$.
Arguing as in \cite[proof of Claim 10.3]{S-mod2},
one finds that $X_2$ is ordinary (i.e. $\rho(X_2)=20$ 
with $\NS(X_2)$ of the same discriminant $-87$).

To compare with another member of the family, we work exclusively in characteristic $2$ (to limit the complexity).
Let $\beta\in\bbF_{256}$ be a root of $\beta^8+\beta^5+\beta^4+\beta^3+1$
and consider the member $X'$ of the family $\mathcal X$ at $\lambda=\beta$.
One finds ($\beta$ by requiring) that $X'$ admits a section of height $61/30$;
its $x$-coordinate is $t(t+1)(t+\beta^3)(t^2+t+1)/(t+\beta^6+\beta^5+\beta^4+\beta)^2$.
As before, this implies $\rho(X')=20$ and $\det\NS(X')=-183$.
In particular, $X_2\not\cong X'$, so the family $\mathcal X$ is non-isotrivial in characteristic $2$ as claimed.

%

We proceed by proving the statement about the generic Picard number -- which clearly satisfies $\rho\geq 19$.
Since we have a one-dimensional family, the only alternative to $\rho=19$ is $\rho=22$, and only in positive characteristic
(because K3 surfaces with $\rho=20$ do not move in a family (just like over $\bbC$),
and $\rho=21$ is impossible, see \cite{Artin}). 
So let us assume that $\rho=22$ and char$(k)=p>0$.
By \cite{Ogus}, there is a unique supersingular K3 surface of Artin invariant $\sigma=1$,
so we would require $\sigma\geq 2$.
For $p\neq 2$ we can argue along the same lines as in Section \ref{s:pf2}:
since $N_0=U + 2 E_6 + A_4 + A_1$ embeds primitively into $\NS=\Lambda=\Lambda_{p,\sigma}$,
but the discriminants  $d(N_0)=90$ and $d(\Lambda_{p,\sigma})=-p^{2\sigma}$ are relatively prime,
the discriminant group of $A_\Lambda$ would be fully supported on $A_{N_0^\perp}$.
The length of this group is bounded by the rank of $N_0^\perp$, i.e. $2\sigma\leq 3$,
contradiction.
To complete the argument, we appeal to the non-supersingular members $X_2$ or $X'$ of the family $\mathcal X$
in characteristic $2$ 
which we have already used  above to prove the non-isotriviality of the family $\mathcal X$.

Having shown that generically $\rho=19$, it remains to prove that 
the N\'eron--Severi lattice generically assumes the given shape, 
i.e.\ $\NS(\mathcal X)\cong N_0$.
By inspection of the discriminant $d(N_0)=90$, $\NS(\mathcal X)$ would otherwise have to be an index $3$ overlattice $N_0$.
But then one verifies that the discriminant group $A_{N_0}$ does not contain any non-zero isotropic elements,
so there is no integral overlattice at all.
\end{proof}

%

\subsection{Comments on characteristic $5$}
\label{s:5}


In characteristic $5$, the full family $\mathcal X$ turns out to be supersingular
(quite remarkably, without the singular fibers degenerating).
For instance, the generic fibre, base changed to $k(\sqrt \lambda)$, admits a section
of height $5/6$
with $x$-coordinate $-t(t+\lambda^3)(t+1/\lambda)$.

In order to work out an analogue of Corollary \ref{cor:simple35} in characteristic $5$,
one can apply the same procedures as above to an initial positive vector $H\in U\subset M_0$
with $H^2=4$.
Along similar lines, 
this leads to the following (non-isotrivial) family of K3 surfaces over $\mathbb Q$
\[
\mathcal Y: \;\;\; y^2+t^2(t-1)^2y = \mu(x^3-3t^3(t-1)^2x-2t^4(t-1)^3).
\]
One shows as before that generically $\rho(\mathcal Y)=19$
and $\NS(\mathcal Y)=U+2E_6+D_5$ outside characteristics $2,3$,
so one obtains simple abelian surfaces as in Corollary \ref{cor:simple35}.

Specifically for characteristic $5$, one can work, for instance, with the
special member $Y$ at $\mu=8/17$ admitting a $\bbQ(\sqrt{17})$-rational section of 
height $17/12$ with $x$-coordinate $t^2(t-1)(t-19)/18$.
The reduction to characteristic $5$ is seen to be ordinary.

In comparison, in characteristic $2$, the full family $\mathcal Y$ turns out to be supersingular.

%


\begin{thebibliography}{99}

\bibitem{Artin}
M.\ Artin, \newblock{Supersingular K3 surfaces}, 
\newblock{Ann. Sci. Ecole Norm. Sup. 4 (1974), 543--568.}


\bibitem{Barth1}
W.\ Barth,
\newblock{K3 Surfaces with Nine Cusps,}
Geom. Dedic. {\bf 72} (1998), 171--178.

\bibitem{Barth2}
W.\ Barth,
\newblock{On the Classification of K3 Surfaces with Nine Cusps},
 Complex Analysis and Algebraic Geometry - A Volume in Memory of Michael Schneider
 (Eds: T. Peternell and F.--O. Schreyer), 42--59 (2000). 

\bibitem{HN}G.\ Harder and M.\ S.\ Narashimhan, \newblock{On the cohomology group
of moduli spaces of vector bundles on curves,} \newblock{Math. Ann. {\bf 212} (1975), 215--248.}

\bibitem{K}T.\ Katsura, \newblock{Generalized Kummer surfaces and 
their unirationality in characteristic $p$,}
\newblock{J. Fac. Sci. Univ. Tokyo Sect. IA Math., {\bf 34} (1987), 1--41}.

\bibitem{KS}
T.\ Katsura and M.\ Sch\"utt,
{Zariski K3 surfaces}, preprint (2017), 
arXiv: 1710.08661.

\bibitem{Kondo-Shimada}
 Kond\=o, S.,
 Shimada, I.:
 \emph{On certain duality of N\'eron-Severi lattices of supersingular K3 surfaces and its application to generic supersingular K3 surfaces},
 Algebr. Geom. {\bf 1} (2014),  311--333.


\bibitem{LM}
M.\ Lieblich and D.\ Maulik,
A note on the cone conjecture for K3 surfaces in positive characteristic,
to appear in Math.\ Res.\ Letters,
arXiv: 1102.3377v5.

\bibitem{MM}
T.\ Matsusaka, D.\ Mumford, 
\newblock{Two fundamental theorems on deformations of polarized varieties}, 
Amer.
J. Math. {\bf 86} (1964), 668--684.


\bibitem{Mi}J.\ S.\ Milne, \newblock{Etale Cohomology,} \newblock{Princeton Univ. Press, Princeton, New Jersey, 1980.}

\bibitem{Miranda}
R.\ Miranda,
\newblock{Triple covers in algebraic geometry},
\newblock{Amer. J. Math. {\bf 107} (1985), 1123--1158.}

\bibitem{M}
D.\ Mumford, \newblock{Abelian Varieties,} \newblock{Oxford Univ. Press, London/New York, 1970.}

\bibitem{Nikulin}
Nikulin, V.~V.: 
\emph{Integral symmetric bilinear forms and some of their applications}, Math.~USSR Izv.~{\bf 14}, No.~1 (1980), 103--167.



\bibitem{Ogus}A.\ Ogus, \newblock{Supersingular K3 crystals, Journ\'ees de 
G\'eom\'etrie Alg\'ebrique de Rennes (Rennes, 1978), Vol. II, 
Ast\'erisque {\bf 64}, Paris: Soci\'et\'e Math\'ematique de France, pp. 3--86}



\bibitem{Oo}F.\ Oort, \newblock{Which abelian varieties are 
products of elliptic curves?,}
\newblock{Math. Ann. {\bf 214} (1975), 35--47.}


\bibitem{Pinkham}
H. Pinkham, 
\newblock{Singularities rationnelles 
de surfaces},
Lecture Notes in Math. {\bf 777}, Springer-Verlag, Berlin, Heidelberg (1980), 
147--178.


\bibitem{S-mod2}
M.\ Sch\"utt, 
\newblock{$\mathbb Q_\ell$-cohomology projective planes and Enriques surfaces in  characteristic two},
preprint (2017),
arXiv: 1703.10441v2.


\bibitem{S-nodal}
M.\ Sch\"utt, 
\emph{Divisibilities among nodal curves},
Math.\ Res.\ Letters {\bf 25} (2018),
1359--1368.




%



\bibitem{SSh} 
 M.\ Sch\"utt and T.\ Shioda, 
 \newblock{Elliptic surfaces},
\newblock{Algebraic geometry in East Asia - Seoul 2008}, 
\newblock{Advanced Studies in Pure Math.~{\bf 60} (2010), 51-160}.  



 
 \bibitem{SM} T.\ Shioda and N.\ Mitani,
 \newblock{Singular abelian surfaces and binary quadratic forms},
 \newblock{Classification of algebraic varieties
 and compact complex manifolds},
 Lect.~Notes in Math.~{\bf 412} (1974), 259--287.
 
 \bibitem{Sterk}
 H.\ Sterk,
 Finiteness results for algebraic K3 surfaces,
 Mathematische Zeitschrift {\bf 189} (1985), 
 507--513.
 

\bibitem{Tate} Tate, J.: {\it Algorithm for determining the type
of a singular fibre in an elliptic pencil}, in: {\it Modular
functions of one variable IV} (Antwerpen 1972), Lect.~Notes in Math.~{\bf 476}
(1975), 33--52.

\end{thebibliography}
\end{document}